\documentclass[11pt]{amsart}
\usepackage{amsmath, amsthm, mathabx,amscd, amsfonts, amssymb, hyperref, mathrsfs, textcomp, epsfig, a4wide, graphicx, IEEEtrantools, tikz, verbatim, xypic,enumerate}

\usetikzlibrary{matrix}

\newtheorem{thm}{Theorem}
\newtheorem{prop}{Proposition}
\newtheorem{lemma}{Lemma}

\theoremstyle{definition}
\newtheorem{defn}{Definition}

\theoremstyle{remark}

     \RequirePackage{rotating}                   
    \def\HSt{%
       \setbox0=\hbox{$\widehat{\mathit{HS}}$}
       \setbox1=\hbox{$\mathit{HS}$}
       \dimen0=1.1\ht0
       \advance\dimen0 by 1.17\ht1
       \smash{\mskip2mu\raise\dimen0\rlap{%
          \begin{turn}{180}
              {$\widehat{\phantom{\mathit{HS}}}$}
           \end{turn}} \mskip-2mu    
                \mathit{HS}
    }{\vphantom{\widehat{\mathit{HS}}}}{}}

     \RequirePackage{rotating}                   
    \def\HMt{%
       \setbox0=\hbox{$\widehat{\mathit{HM}}$}
       \setbox1=\hbox{$\mathit{HM}$}
       \dimen0=1.1\ht0
       \advance\dimen0 by 1.17\ht1
       \smash{\mskip2mu\raise\dimen0\rlap{%
          \begin{turn}{180}
              {$\widehat{\phantom{\mathit{HM}}}$}
           \end{turn}} \mskip-2mu    
                \mathit{HM}
    }{\vphantom{\widehat{\mathit{HM}}}}{}}
    
    \newcommand{\HMb}{\overline{\mathit{HM}}}

    \newcommand{\HSb}{\overline{\mathit{HS}}}
\newcommand{\HSf}{\widehat{\mathit{HS}}}
\newcommand{\spin}{\mathfrak{s}}
\newcommand{\ztwo}{\mathbb{F}}
\newcommand{\Pin}{\mathrm{Pin}(2)}
\newcommand{\Rin}{\mathcal{R}}

\newcommand{\s}{\mathbf{s}}
\newcommand{\x}{\mathbf{x}}
\newcommand{\mt}{\text{mod }2}
\newcommand{\K}{\mathrm{K}}
\newcommand{\KO}{\mathrm{KO}}
\newcommand{\KR}{\mathrm{KR}}
\newcommand{\KSp}{\mathrm{KSp}}
\newcommand{\KQ}{\mathrm{KQ}}
\newcommand{\Op}{\mathrm{Op}}
\newcommand\numberthis{\addtocounter{equation}{1}\tag{\theequation}}

\begin{document}

\begin{abstract}
We show that the bar version of the $\Pin$-monopole Floer homology of a three-manifold $Y$ equipped with a self-conjugate spin$^c$ structure $\spin$ is determined by the triple cup product of $Y$ together with the Rokhlin invariants of the spin structures inducing $\spin$. This is a manifestation of mod $2$ index theory, and can be interpreted as a three-dimensional counterpart of Atiyah's classic results regarding spin structures on Riemann surfaces.
\end{abstract}

\title{$\mathrm{PIN}(2)$-Monopole Floer homology and the Rokhlin invariant}

\author{Francesco Lin}
\address{Department of Mathematics, Princeton University and Institute for Advanced Study} 
\email{fl4@math.princeton.edu}
\maketitle

In \cite{Lin} we introduced for each closed oriented three-manifold $(Y,\spin)$ equipped with a self-conjugate spin$^c$ structure (i.e. $\spin=\bar{\spin}$) the $\Pin$-monopole Floer homology groups
\begin{equation*}
\HSb_*(Y,\spin),\quad \HSt_*(Y,\spin),\quad \HSf_*(Y,\spin).
\end{equation*}
These are graded modules over the ring $\Rin=\ztwo[V,Q]/Q^3$, where $V$ and $Q$ have degrees respectively $-4$ and $-1$, and $\ztwo$ is the field with two elements. To define them, one exploits the
\begin{equation*}
\Pin=S^1\cup j\cdot S^1\subset\mathbb{H}
\end{equation*}
symmetry of the Seiberg-Witten equations, and in the case $b_1(Y)=0$ they are the analogue of Manolescu's invariants (\cite{Man2}) in the context of Kronheimer and Mrowka's monopole Floer homology (\cite{KM}). In particular, they can be used to provide an alternative disproof of the Triangulation conjecture. We refer the reader to \cite{Lin3} for a friendly introduction to the construction, and to \cite{Man3} for a survey on the Triangulation conjecture.
\\
\par
In the present paper, we will focus on the simplest of the three invariants, $\HSb_*(Y,\spin)$. This only involves reducible solutions and, heuristically, it computes the homology of the boundary of the moduli space of configurations. It is shown in \cite{KM} that their monopole Floer homology $\HMb_*(Y,\spin)$ is determined entirely by the triple cup product
\begin{align*}
\cup^3_Y:\Lambda^3H^1(Y;\mathbb{Z})&\rightarrow \mathbb{Z}\\
a_1\wedge a_2\wedge a_3&\mapsto \langle a_1\cup a_2\cup a_3, [Y]\rangle.
\end{align*}
This is a key step in the proof of their non-vanishing theorem (Corollary $35.1.3$ in \cite{KM}), which is in turn one of the main ingredients of Taubes' proof of the Weinstein conjecture in dimension $3$, see \cite{Tau}. In our set-up, recall that there is a natural map
\begin{equation*}
\left\{\text{spin structures}\right\}\rightarrow \left\{\text{self-conjugate spin$^c$ structures}\right\}
\end{equation*}
which is surjective and has fibers of cardinality $2^{b_1(Y)}$. This should be compared to the Bockstein sequence
\begin{equation*}
\cdots\longrightarrow H^1(Y;\mathbb{Z})\longrightarrow H^1(Y;\ztwo)\longrightarrow H^2(Y;\mathbb{Z})\stackrel{\cdot 2}{\longrightarrow}H^2(Y;\mathbb{Z})\longrightarrow\cdots,
\end{equation*}
which implies that the set of spin structures inducing $\spin$ (which we denote by $\mathrm{Spin}(\spin)$) is an affine space over $H^1(Y;\mathbb{Z})\otimes\ztwo$. We will denote an element of $\mathrm{Spin}(\spin)$ by $\s$. For each spin structure $\s$ there is the (rational) Rokhlin invariant
\begin{equation*}\mu(\s)=\sigma(W)/8\text{ mod }2
\end{equation*}
where $W$ is any four-manifold whose boundary is $Y$ and on which $\s$ extends. We will think of the collection of the Rokhlin invariants of the spin structures in $\mathrm{Spin}(\spin)$ as a map $\mu_{\spin}$ (called the Rokhlin map). With this in hand, we are ready to state our main theorem.
\begin{thm}\label{main}
The Floer homology group $\HSb_*(Y,\spin)$ is determined by the triple cup product $\cup^3_Y$ and the Rokhlin map $\mu_{\spin}$. More precisely, given $(Y_0,\spin_0)$ and $(Y_1,\spin_1)$, suppose that:
\begin{itemize}
\item there exist (respectively linear and affine) isomorphisms
\begin{align*}
\varphi: H^1(Y_0;\mathbb{Z})&\rightarrow H^1(Y_1;\mathbb{Z})\\
\Phi:\mathrm{Spin}(\spin_0)&\rightarrow\mathrm{Spin}(\spin_1)
\end{align*}
such that the linear part of $\Phi$ is the reduction modulo $2$ of $\varphi$;
\item we have $\varphi^*(\cup^3_{Y_1})=\cup^3_{Y_0}$, and $\mu_{\spin_1}\circ\Phi-\mu_{\spin_0}$ is constant.
\end{itemize}
Then there is an isomorphism of graded $\Rin$-modules
\begin{equation*}
\HSb_*(Y_0,\spin_0)\equiv\HSb_*(Y_1,\spin_1),
\end{equation*}
up to an overall grading shift.
\end{thm}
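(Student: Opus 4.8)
\emph{Strategy of proof.} The plan is to reduce $\HSb_*$ to an explicit algebraic model depending only on the three pieces of data in the statement, and then to check that the hypotheses produce an isomorphism between the models attached to $(Y_0,\spin_0)$ and $(Y_1,\spin_1)$. Being the ``bar'' flavour, $\HSb_*$ is computed entirely from the reducible locus of the blown-up configuration space. For $(Y,\spin)$ with $\spin=\bar\spin$ this locus is a $\mathbb{CP}^\infty$-bundle over the torus $\T=H^1(Y;\mathbb{R})/H^1(Y;\mathbb{Z})$ of flat spin$^c$ connections: the subgroup $S^1\subset\Pin$ acts fibrewise with $\mathbb{CP}^\infty=BS^1$, while $j$ acts on $\T$ by the conjugation involution $x\mapsto-x$ and antilinearly on the fibres. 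The fixed points of $x\mapsto-x$ on $\T$ are the half-lattice points, which the Seiberg-Witten correspondence identifies with $\mathrm{Spin}(\spin)$ (the $j$-fixed reducibles are precisely the flat connections coming from spin structures), and in $\ztwo$-coefficients the whole $\Pin$-equivariant computation localizes near these points.

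First I would establish a \emph{formality} statement: the bar chain complex, as a complex of graded $\Rin$-modules, is controlled by the family of Dirac operators over $\T$ together with its $\ztwo$-equivariant enhancement, i.e.\ the residual $j$-symmetry coming from the quaternionic structure. By the family index theorem the characteristic classes of this Dirac family are computed by pushing powers of $c_1$ of the universal line bundle on $Y\times\T$ forward to $\T$; since $c_1(\spin)$ is torsion and $H^1(\T;\mathbb{Z})$ is canonically $H_1(Y;\mathbb{Z})$, a short computation shows the only surviving component is the triple cup product $\cup^3_Y$, regarded as a degree-$3$ class on $\T$. Hence $\cup^3_Y$ records all of the $Y$-dependence of the $V$-part of the differential and of the $\Rin$-module structure; that nothing else enters should follow by adapting to the $Q$-action the obstruction-theoretic argument of Chapter $35$ of \cite{KM}, which treats the purely $S^1$-equivariant case. (Here one uses $H^*(B\Pin;\ztwo)=\ztwo[q,v]/(q^3)$ with $|q|=1$ and $|v|=4$, so that $Q^3=0$.)

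The second step, which I expect to be the principal obstacle, is the ``manifestation of mod $2$ index theory'' of the abstract: the identification of the $\ztwo$-equivariant enhancement with the Rokhlin map. The Dirac family over $\T$ with its $j$-symmetry is a family of quaternionic (self-conjugate) operators, and its $\ztwo$-valued Atiyah-type index, together with the attendant shifts of the Atiyah-Patodi-Singer $\eta$-corrections that pin down the absolute grading, is concentrated at the fixed points $\s\in\mathrm{Spin}(\spin)$, where it reproduces the Rokhlin invariants $\mu(\s)$; globally it twists the $\mathbb{CP}^\infty$-bundle in a way that, around the circle in $\T$ dual to $\alpha\in H^1(Y;\mathbb{Z})\otimes\ztwo$, is governed by $\mu(\s+\alpha)-\mu(\s)$. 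This is the precise three-dimensional counterpart of Atiyah's theorem that the mod $2$ index of the Dirac operator on a Riemann surface varies over the space of spin structures as a quadratic function refining the intersection form: here $\mu_\spin$ varies over $\mathrm{Spin}(\spin)$ as a cubic-type function whose third finite difference is the mod $2$ reduction of $\cup^3_Y$, which is exactly where the two steps meet. Putting them together gives an isomorphism $\HSb_*(Y,\spin)\cong H_*\big(\bar C(H^1(Y;\mathbb{Z}),\cup^3_Y,\mu_\spin)\big)$ with a universal model complex.

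Granting the model the theorem follows quickly: $(\varphi,\Phi)$ induces an isomorphism of the underlying graded $\Rin$-modules of the two model complexes, the equality $\varphi^*(\cup^3_{Y_1})=\cup^3_{Y_0}$ makes it commute with the $\cup^3$-part of the differential, and the constancy of $\mu_{\spin_1}\circ\Phi-\mu_{\spin_0}$ makes it commute with the $j$-twisting part, the additive constant being absorbed into an overall shift of the absolute grading (consistently with the grading of the bottom of the tower at $\s$ being a fixed linear function of $\mu(\s)$); passing to homology gives the asserted isomorphism up to grading shift. As noted, the hard part will be the second step --- setting up the real/quaternionic family index and the blow-up carefully enough to prove that the $\ztwo$-index equals the Rokhlin difference, with the normalizations arranged so that the ``constant'' freedom in the hypothesis is matched precisely by the grading-shift freedom in the conclusion; a secondary difficulty is the chain-level formality of the first step, where the $Q$-action and the $\ztwo$-symmetry make the bookkeeping heavier than in the $S^1$-equivariant setting of \cite{KM}.
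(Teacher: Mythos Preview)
Your strategy is correct in substance and identifies all of the essential ingredients: that $\HSb_*$ is governed by the family of Dirac operators over $\T$ together with its $j$-symmetry, that the non-equivariant part of this family is captured by $\cup^3_Y$ via the ordinary families index theorem, and that the additional equivariant information localises at the fixed points and is encoded by differences of Rokhlin invariants. Your identification of the second step as the crux, and of the third finite difference of $\mu_\spin$ with the mod $2$ triple cup product, is also on target.

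Where the paper differs is in organisation, and this difference buys a genuine simplification. Rather than building an explicit model complex $\bar C(H^1,\cup^3_Y,\mu_\spin)$ and proving a chain-level formality statement for it, the paper introduces a quaternionic $\K$-theory $\KQ^*$ for spaces with involution (a variant of Atiyah's $\KR$), proves that equivariant coupled Morse homology depends only on the class of the family in $\KQ^1(\T)$, and then computes $\KQ^1(\T^n)$ completely: it is a direct sum of copies of $\mathbb{Z}$ and $\mathbb{Z}/2\mathbb{Z}$ indexed by subsets of a basis. The forgetful map $\KQ^1(\T)\to\K^1(\T)$ is rationally injective and its image is pinned down by $\cup^3_Y$ exactly as in \cite{KM}; the $\mathbb{Z}/2\mathbb{Z}$ torsion summands are detected by restricting the family to equivariant loops in $\T$, where the invariant is precisely a mod $2$ spectral flow between two spin Dirac operators. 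The final link, that this mod $2$ spectral flow equals the difference of Rokhlin invariants, is read off from the absolute grading conventions. This $\K$-theoretic packaging entirely sidesteps the ``chain-level formality'' difficulty you flag as secondary: one never compares chain complexes, only homotopy classes of families, and the invariance of coupled Morse homology under homotopy of the family is the standard continuation argument. Your approach would also work, but you would be re-deriving by hand the structure that $\KQ^1(\T^n)$ encodes for free.
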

The two main protagonists of the theorem, the Rokhlin map $\mu_{\spin}$ and the triple cup product $\cup^3_Y$ are in fact intimately connected.
\begin{prop}\label{cubic}
The Rokhlin map $\mu_{\spin}$ on $\mathrm{Spin}(\spin)$ is cubic, with its cubic part given by the reduction modulo $2$ of the triple cup product.
\end{prop}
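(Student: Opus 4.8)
The plan is to reduce the statement to a single computation on the three‑torus, in a way that mirrors one dimension down the treatment of spin structures on Riemann surfaces. Fix a base point $\s_0\in\mathrm{Spin}(\spin)$ and a spin four‑manifold $(W_0,\mathfrak{t}_0)$ bounding $(Y,\s_0)$, so that $\mu(\s_0)=\sigma(W_0)/8$; such a filling exists because $\Omega^{\mathrm{Spin}}_3(\mathrm{pt})=0$, and $\sigma(W_0)/8\bmod 2$ is independent of the choice by Novikov additivity together with Rokhlin's theorem for closed spin four‑manifolds. Choose maps $\alpha_1,\dots,\alpha_n\colon Y\to S^1$ whose classes generate $H^1(Y;\mathbb{Z})$ and assemble them into $\alpha\colon Y\to T^n$; then the image of $\alpha^{*}\colon H^1(T^n;\ztwo)\to H^1(Y;\ztwo)$ is exactly $H^1(Y;\mathbb{Z})\otimes\ztwo$, the group over which $\mathrm{Spin}(\spin)$ is modelled, so it suffices to understand the function $w\mapsto\mu(\s_0+\alpha^{*}w)-\mu(\s_0)$ on $H^1(T^n;\ztwo)$. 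The formal heart of the argument is that this function depends only on the bordism class $[Y,\s_0,\alpha]\in\Omega^{\mathrm{Spin}}_3(T^n)$ and is additive in it: if $(Y,\s_0,\alpha)=\partial(W,\mathfrak{t},A)$, then for $w\in H^1(T^n;\ztwo)$ the spin structure $\mathfrak{t}+A^{*}w$ on $W$ restricts to $\s_0+\alpha^{*}w$ on $Y$, and twisting a spin structure does not alter the underlying manifold, so $\mu(\s_0+\alpha^{*}w)=\sigma(W)/8=\mu(\s_0)$; additivity under disjoint union (using $\mu(-Y,\s)=-\mu(Y,\s)$) is immediate. We therefore obtain a homomorphism from $\Omega^{\mathrm{Spin}}_3(T^n)$ to the group of functions $H^1(T^n;\ztwo)\to\mathbb{Q}/2\mathbb{Z}$ vanishing at $0$, and it remains to evaluate it. (Equivalently, one may package the same input as an explicit spin cobordism from $(Y,\s_0)$ to $(Y,\s_0+v)$ and invoke Novikov additivity.)

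Next I would invoke the splitting $\Omega^{\mathrm{Spin}}_3(T^n)\cong\bigoplus_{I\subseteq\{1,\dots,n\}}\Omega^{\mathrm{Spin}}_{3-|I|}(\mathrm{pt})$, a consequence of the stable parallelizability of $T^n$; the summands with $|I|=3$ give $\bigoplus_{|I|=3}\mathbb{Z}\cong H_3(T^n;\mathbb{Z})$, those with $|I|\in\{1,2\}$ are copies of $\ztwo$, and $|I|=0$ contributes $\Omega^{\mathrm{Spin}}_3(\mathrm{pt})=0$. The $H_3(T^n;\mathbb{Z})$‑component of $[Y,\s_0,\alpha]$ is $\alpha_{*}[Y]$, whose pairings with $H^3(T^n;\mathbb{Z})$ are precisely the integers $\langle\alpha_i\cup\alpha_j\cup\alpha_k,[Y]\rangle$, i.e.\ the triple cup product of $Y$ in the chosen generators. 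The generator attached to a subset $I$ with $|I|=k\le 2$ is represented by a three‑manifold whose map to $T^n$ factors through the sub‑torus $T^I$, so the corresponding function depends only on the $k$ coordinates of $w$ indexed by $I$ and is therefore a polynomial of degree $\le k\le 2$; the generators attached to three‑element subsets $\{i,j,k\}$ can be taken to be the coordinate sub‑tori $T^3\hookrightarrow T^n$ carrying the spin structure that is bounding on each circle factor. Writing $\theta\colon\ztwo^{3}\to\mathbb{Q}/2\mathbb{Z}$ for the function attached to this model $T^3$, additivity of our homomorphism yields
\[
\mu(\s_0+\alpha^{*}w)-\mu(\s_0)=Q(w)+\sum_{i<j<k}\langle\alpha_i\cup\alpha_j\cup\alpha_k,[Y]\rangle\,\theta(w_i,w_j,w_k)
\]
with $Q$ a polynomial of degree $\le 2$; in particular $\mu_{\spin}$ is cubic, with cubic (trilinear) part equal to $\langle\alpha_i\cup\alpha_j\cup\alpha_k,[Y]\rangle$ times the cubic part of $\theta$.

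Finally one computes $\theta$, that is, the Rokhlin invariants of the eight spin structures on $T^3$. Seven of them restrict to the bounding spin structure on some coordinate circle; writing $T^3=S^1\times T^2$ with that circle first, each such spin structure extends over $D^2\times T^2$, which has vanishing signature, so its Rokhlin invariant is $0$. The remaining one is the Lie‑group (all‑periodic) spin structure $\s_{\mathrm{Lie}}$, and the essential input is the identity $\mu(T^3,\s_{\mathrm{Lie}})=1$ in $\mathbb{Q}/2\mathbb{Z}$ --- equivalently, every spin filling of $(T^3,\s_{\mathrm{Lie}})$ has signature $\equiv 8\pmod{16}$; this is the three‑dimensional counterpart of Atiyah's fact that the Lie‑group spin structure on $T^2$ has odd parity, and is classical. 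Granting it, $\theta(w)=w_1w_2w_3$ (it is nonzero exactly at $w=(1,1,1)$), whose cubic part is the generator of $\Lambda^3(\ztwo^{3})^{*}$, and substituting back identifies the cubic part of $\mu_{\spin}$ with $(a_1,a_2,a_3)\mapsto\langle a_1\cup a_2\cup a_3,[Y]\rangle\bmod 2$, as asserted. Everything preceding this computation is formal; the evaluation of the signature mod $16$ of a spin filling of the all‑periodic three‑torus is the genuinely delicate point, and is exactly where mod $2$ index theory enters. One may also route the argument through a surgery formula: a change of $\s_0$ by $v$ can be realised by a cobordism built from an embedded surface dual to $v$ --- which, since $v$ lies in $H^1(Y;\mathbb{Z})\otimes\ztwo$, may be taken orientable --- and then $\mu(\s_0+v)-\mu(\s_0)$ becomes the Arf invariant of the spin structure that $\s_0$ induces on that surface, putting one squarely in Atiyah's two‑dimensional setting; for the full group $H^1(Y;\ztwo)$ one would instead need $\mathrm{Pin}^{-}$ bordism and the Brown invariant of $\mathbb{RP}^2$, and the bookkeeping for non‑orientable dual surfaces would be the main extra chore.
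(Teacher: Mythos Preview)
Your proof is correct and takes a genuinely different route from the paper's. The paper works directly with embedded surfaces: given $\s'=\s+\x$, it builds an explicit spin cobordism from $(Y,\s)$ to $(Y,\s')$ by excising a neighborhood of a surface $\Sigma$ Poincar\'e dual to $\x$ inside $Y\times[-1,1]$ and filling the resulting $S^1\times\Sigma$ boundary, thereby identifying $\mu(\s)-\mu(\s')$ with the Arf invariant $[\s|_\Sigma]\in\Omega_2^{\mathrm{Spin}}$; the cubic dependence on $\x$ then comes from a cut-and-paste formula expressing $[\s|_{\Sigma}]$, for $\Sigma$ dual to $\sum\lambda_i\x_i$, in terms of the restrictions of $\s$ to the $\Sigma_i$, the $\Sigma_i\cap\Sigma_j$, and the $\Sigma_i\cap\Sigma_j\cap\Sigma_k$. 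Your argument instead packages everything into $\Omega^{\mathrm{Spin}}_3(T^n)$, using its stable splitting to isolate the cubic contribution as coming only from the summands with $|I|=3$, and then reduces to the single $T^3$ computation. Both approaches ultimately rest on the same key input, that the Lie spin structure on $T^3$ has Rokhlin invariant $1$. What the paper's approach buys is explicit geometric content for the lower-degree coefficients --- they are the spin cobordism classes of the $\Sigma_i$ and of the $\Sigma_i\cap\Sigma_j$ --- which you only bound abstractly by degree; what your approach buys is a cleaner formal structure that avoids the cut-and-paste bookkeeping and makes the role of bordism invariance manifest. Amusingly, you sketch essentially the paper's approach in your final paragraph as an alternative.
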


We point out that while our main result shows that $\HSb_*(Y,\spin)$ is determined by rather explicit topological data, the actual computation of the group is in general quite laborious. We will provide several concrete examples in Section \ref{examples}.
\\
\par
The key inspiration for these results is the classic paper \cite{Ati}. The main difference is that while \cite{Ati} focuses on spin manifolds of dimension $8k+2$ (for which the spin Dirac operator is skew-adjoint), much of the content of the present paper can be generalized to spin manifolds of dimension $8k+3$ (for which the spin Dirac operator is quaternionic).
\par
The key ideas in the proof of our main result are the following. There is a natural involution on the torus of flat connections
\begin{equation*}
\mathbb{T}=H^1(Y;i\mathbb{R})/2\pi i H^1(Y;\mathbb{Z})
\end{equation*}
given by conjugation. This has exactly $2^{b_1(Y)}$ fixed points corresponding to the spin connections of the elements of $\mathrm{Spin}(\spin)$ or, equivalently, the flat connections with holonomy $\{\pm1\}$. Furthermore, the spinor bundle $S\rightarrow Y$ is quaternionic (with $j$ acting from the right), and the family of Dirac operators $\{D_B\}$ parametrized by $[B]\in\mathbb{T}$ satisfies
\begin{equation*}
D_B(\Psi\cdot j)=(D_{\bar{B}}\Psi)\cdot j.
\end{equation*}
The key point is that  $\HSb_*(Y,\spin)$ can be shown to depend \textit{only} on the homotopy class of this family of operators with involution, and the latter can be determined via a suitable $\K$-theoretic index theorem. As in the case of \cite{Ati} (see also \cite{AS5}), such a homotopy class cannot be recovered purely in cohomological terms. Fortunately enough, the information lost passing to cohomology can be fully recovered in terms of the $\mt$ spectral flow between the Dirac operators corresponding to the spin connections, which is in turn determined by the Rokhlin map $\mu_{\spin}$.
\\
\par
\textit{Plan of the paper.} In Section \ref{Ktheory}, after discussing a suitable $\K$-theoretic framework, we describe the index theorem relevant to our problem. In Section \ref{HS} we discuss the specialization of this index theorem to our case of interest, and prove Theorem \ref{main} and Proposition \ref{cubic}. In Section \ref{examples} we provide some explicit examples of computations.

\vspace{0.3cm}
\textit{Acknowledgements. }I am grateful to Tom Mrowka for many inspiring discussions and for his lectures about Atiyah's paper during his \textit{Riemann Surfaces} class in Spring 2013 at MIT. This work was supported by the the Shing-Shen Chern Membership Fund and the IAS Fund for Math.

\vspace{0.5cm}
\section{Quaternionic $\K$-theory and the index theorem for families}\label{Ktheory}

In this section we discuss the index theorem which is relevant to our situation. While we were not able to find in the literature the exact result we will need, the content of this section is quite standard and the proofs are straightforward adaptations of those appearing in the classical papers in the subject (to which we will refer for more details).
\\
\par
\textbf{Quaternionic $\K$-theory. }We start by discussing a simple variant of Atiyah's real $\K$-theory of spaces with a involution $\KR$ introduced in \cite{AtiR}. The key observation behind the following construction is that a real vector space can be thought as a complex vector space $E$ equipped with a complex antilinear map squaring to $1_E$, while a quaternionic vector space is a complex vector space $E$ equipped with a complex antilinear map squaring to $-1_E$. We will denote the latter by $j$. From this description we see that for any space $X$ there are natural product maps
\begin{align*}
\KO^i(X)\otimes \KO^j(X)&\rightarrow \KO^{i+j}(X)\\
\KO^i(X)\otimes \KSp^j(X)&\rightarrow \KSp^{i+j}(X)\numberthis \label{KOKR}\\ 
\KSp^i(X)\otimes \KSp^j(X)&\rightarrow \KO^{i+j}(X),
\end{align*}
given by tensor products.
\\
\par
Suppose now $X$ is a compact Hausdorff space equipped with an involution (denoted either by $\tau$ or $x\mapsto \bar{x}$). The key example is the $n$-dimensional torus $(\mathbb{R}/2\pi \mathbb{Z})^n$ with the involution induced by $x\mapsto -x$. We will denote this space with involution by $\mathbb{T}^n$ or, when the dimension is not important, simply by $\mathbb{T}$.
A \textit{real} (resp. \textit{quaternionic}) vector bundle $E$ is a bundle equipped with a complex antilinear map $j$ covering $\tau$ such that $j^2$ is $1_E$ (resp. $-1_E$). The Grothendieck group of real vector bundles was introduced in \cite{AtiR} and is denoted by $\KR(X)$. It is important to remark that it is \textit{not} the $\mathbb{Z}_2$-equivariant $\K$-theory of $X$.
\begin{defn}
Given a space with involution $X$, its \textit{quaternionic $\K$-theory} $\KQ(X)$ is defined to be the Grothendieck group of quaternionic vector bundles.
\end{defn}
There are natural maps
\begin{equation*}
\K(X)\leftarrow\KQ(X)\rightarrow \KSp(X^{\tau})
\end{equation*}
given respectively by forgetting the quaternionic structure and restricting to the fixed points of the involution. If $X_*$ is a space with basepoint $\ast$ (which we assume to be a fixed point for $\tau$), we can define the reduced quaternionic $K$-theory $\widetilde{\KQ}(X_*)$ as the kernel of the augmentation map to
\begin{equation*}
\KQ(X_*)\rightarrow \KQ(\ast).
\end{equation*}
As usual, we can then extend the definition of $\KQ$ to define a cohomology theory on spaces with or without basepoints via
\begin{align*}
\widetilde{\KQ}^{-i}(X_*)&=\widetilde{\KQ}(\Sigma^iX_*)\\
\KQ^{-i}(X)&=\widetilde{\KQ}(\Sigma^i (X\cup \ast)).
\end{align*}
Here by $\Sigma X_*$ we denote the reduced suspension
\begin{equation*}
[0,1]\times X_*/(\{0\}\times X_*\cup \{1\}\times X_*\cup [0,1]\times\{\ast\})
\end{equation*}
with the involution induced by $(t,x)\mapsto (t, \tau(x))$. We can also define the alternative suspension $\widetilde{\Sigma} X_*$ with the same underlying space but involution induced by $(t,x)\mapsto (1-t,\tau(x))$. The following is the key version of Bott periodicity for this version of $\K$-theory.
\begin{prop}\label{periodicity}
There are canonical isomorphisms
\begin{align*}
\KQ^{i+8}(X)&\cong \KQ^i(X)\\
\widetilde{\KQ}^i(\widetilde{\Sigma}^jX_*)&\cong \widetilde{\KQ}^{i+j}(X_*)
\end{align*}
\end{prop}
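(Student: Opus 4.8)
The plan is to obtain both isomorphisms as the quaternionic analogues of the periodicity theorems of \cite{AtiR} for $\KR$-theory and of the Atiyah--Bott--Shapiro description of $\KO$- and $\KSp$-theory via Clifford modules; the one genuinely new input required is the compatibility of the quaternionic structure with Atiyah's Real Bott class, which can be checked directly. Two formal facts drive the argument: first, that the external tensor products (\ref{KOKR}) have versions for spaces with involution --- in particular a pairing $\KR\otimes\KQ\to\KQ$, making $\widetilde{\KQ}$ a module over $\widetilde{\KR}$, together with a pairing $\KQ\otimes\KQ\to\KR$; and second, that all of Atiyah's constructions, cofiber sequences and clutching arguments for $\KR$ have literal analogues for $\KQ$, the sign of $j^2$ being irrelevant to this bookkeeping. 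One first reduces to finite $\mathbb{Z}/2$-CW complexes by the usual limiting argument.

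For the second isomorphism, an induction on $j$ reduces the claim to the case $j=1$, namely $\widetilde{\KQ}^i(\widetilde{\Sigma}X_*)\cong\widetilde{\KQ}^{i+1}(X_*)$. Using the ordinary suspension isomorphism and the $\mathbb{Z}/2$-homeomorphism $\widetilde{\Sigma}\Sigma X_*\cong(\mathbb{CP}^1,c)\wedge X_*$ --- both sides being the one-point compactification of $\mathbb{C}$ with complex conjugation $c$ smashed with $X_*$, with the fixed point $\infty$ as basepoint --- this case is in turn equivalent to the statement that exterior multiplication by the Real Bott class $\beta=[H]-1\in\widetilde{\KR}(\mathbb{CP}^1,c)$, where $H$ is the Hopf line bundle (which is a Real bundle with respect to $c$), induces an isomorphism
\begin{equation*}
\widetilde{\KQ}^i(X_*)\longrightarrow\widetilde{\KQ}^i\bigl((\mathbb{CP}^1,c)\wedge X_*\bigr).
\end{equation*}
This is precisely the $\KQ$-version of the fundamental periodicity theorem of \cite{AtiR}, and its proof carries over with the evident changes: the argument there --- an equivariant elaboration of the Atiyah--Bott clutching-function proof of complex Bott periodicity --- uses only the $\widetilde{\KR}$-module structure, the projective bundle formula, and standard properties of clutching functions, and none of these is affected by whether the antilinear structure squares to $+1$ or to $-1$.

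For the first isomorphism, the period-eight statement is equivalent to $\widetilde{\KQ}(\Sigma^8 Z)\cong\widetilde{\KQ}(Z)$ for every pointed space with involution $Z$ (which then also extends the definition of $\KQ^i$ to all $i\in\mathbb{Z}$). This is obtained by the same bootstrap as in \cite{AtiR}: one has the quaternionic analogue of the cofiber sequence relating $\widetilde{\KR}$ and $\widetilde{K}$ (induced by forgetting the quaternionic structure, respectively coning it off), and combining it with the $(1,1)$-periodicity just established and with complex Bott periodicity yields period eight for $\widetilde{\KQ}$ exactly as it does for $\widetilde{\KR}$. Equivalently --- and this is the cleaner conceptual statement --- one establishes a canonical comparison isomorphism $\KQ^i(X)\cong\KR^{i+4}(X)$, realized by exterior multiplication with the tautological quaternionic line bundle over the sphere $S^{0,4}=\widetilde{\Sigma}^4S^0$ (the one-point compactification of $\mathbb{R}^4$ with the involution $v\mapsto-v$) via the pairing $\KQ\otimes\KQ\to\KR$; on the trivial-involution part this specializes to the classical $\KSp^i\cong\KO^{i+4}$, the Morita equivalence $Cl_{0,4}\simeq M_2(\mathbb{H})$ being responsible for the shift by four and, in the form $Cl_{0,8}\simeq M_{16}(\mathbb{R})$, for period eight. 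One then reads off the first displayed isomorphism from period eight for $\KR$, using the second isomorphism to trade the $S^{0,4}$- and $S^{0,8}$-suspensions for ordinary ones.

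The main obstacle is the same as classically: these periodicities are \emph{not} formal consequences of the module structure, since invertibility of the relevant Bott classes is exactly what must be proved, so the real work lies in verifying that the clutching-function argument of \cite{AtiR} and the Clifford-algebra computations underlying Atiyah--Bott--Shapiro survive the passage to the quaternionic, $\mathbb{Z}/2$-equivariant setting --- which they do, but only after some care with signs and with the equivariant projective bundle formula. A secondary, purely notational, nuisance is keeping track of the various representation spheres ($S^{1,0}$, the reflection circle $\widetilde{\Sigma}S^0$, $(\mathbb{CP}^1,c)$, $S^{0,4}$, $S^{0,8}$) and of the equivariant homeomorphisms among their smash products.
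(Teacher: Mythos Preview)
Your argument is correct and follows essentially the same strategy as the paper: both reduce $\KQ$-periodicity to the known periodicity theorems for $\KR$ via multiplicative structure. The paper's proof, however, is considerably more terse than yours. It simply records the product maps
\begin{align*}
\KQ^i(X)\otimes\KSp^j(\ast)&\longrightarrow\KR^{i+j}(X),\\
\KR^i(X)\otimes\KSp^j(\ast)&\longrightarrow\KQ^{i+j}(X),
\end{align*}
and appeals to the periodicity theorems for $\KR$ and $\KSp$ as black boxes from \cite{AtiR}. Your ``cleaner conceptual statement'' $\KQ^i(X)\cong\KR^{i+4}(X)$ is exactly what these products give upon multiplying by the generator of $\KSp^{-4}(\ast)\cong\mathbb{Z}$, and your use of the Real Bott class for the $\widetilde{\Sigma}$-periodicity is Atiyah's $(1,1)$-periodicity for $\KR$ transported along the $\KR$-module structure on $\KQ$. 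The additional material you supply --- re-running the clutching argument, the cofiber-sequence bootstrap, the Clifford-algebra computations --- is all correct but goes beyond what the paper asks for: since the invertibility of the relevant Bott classes is already established in the cited literature for $\KR$, and since the $\KR$-module structure on $\KQ$ is formal, no adaptation of the clutching-function proof is strictly necessary here.
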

Hence, roughly speaking, suspension by $\tilde{\Sigma}$ is the inverse of suspension by $\Sigma$ on reduced $\KQ$-theory.
\begin{proof}
The proof readily using analogue of the products (\ref{KOKR}) given by
\begin{align*}
\KQ^i(X)\otimes {\KSp}^j(*)&\rightarrow \KR^{i+j}(X)\\
\KR^i(X)\otimes {\KSp}^j(*)&\rightarrow \KQ^{i+j}(X),
\end{align*}
and the periodicity theorems for $\KR$ and $\KSp$.
\end{proof}
For example, using the above we have
\begin{multline*}
\KQ^i(\mathbb{T}^1)=\KQ^{i-8}(\mathbb{T}^1)=\\=\widetilde{\KQ}^{i-1}(\Sigma^7(\mathbb{T}^1\cup \ast))=\widetilde{\KSp}^{i-1}(S^6\vee S^7)={\KSp}^{i-7}(\ast)\oplus{\KSp}^{i-8}(\ast),
\end{multline*}
where we used that $\mathbb{T}\wedge X$ is naturally identified with $\widetilde{\Sigma}X$. Recall the basic computation for $\KSp$-theory following from Bott periodicity
\begin{center}
\begin{tikzpicture}
\matrix (m) [matrix of math nodes,row sep=0.5em,column sep=1em,minimum width=2em]
{i& 0 & 1 &2&3&4&5&6&7\\
\KSp^{-i}(\ast) &\mathbb{Z}& 0&0&0&\mathbb{Z}&\mathbb{Z}/2\mathbb{Z}&\mathbb{Z}/2\mathbb{Z}&0\\
};
\draw(m-1-1.south west)--(m-1-9.south east);
\end{tikzpicture}
\end{center}
so that, for example
\begin{equation}\label{onedim}
\KQ^1(\mathbb{T}^1)=\mathbb{Z}/2\mathbb{Z}.
\end{equation}
More in general, we have the following.
\begin{lemma}\label{torus}We have the isomorphism
\begin{equation*}
\KQ^1(\mathbb{T}^n)=\mathbb{Z}_2^{a_n}\oplus \mathbb{Z}^{b_k}
\end{equation*}
where
\begin{equation*}
a_n=\bigoplus_{\substack{k\equiv 1,2 \text{ mod } 8\\ \quad1\leq  k\leq n}} {n\choose k}\qquad
b_n=\bigoplus_{\substack{k=3,7 \text{ mod } 8 \\ \quad1\leq  k\leq n}} {n\choose k}
\end{equation*}
\end{lemma}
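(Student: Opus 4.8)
The plan is to reduce the computation to the one-variable case \eqref{onedim} by splitting $(\mathbb{T}^n)_+$ into a wedge of smash powers of $\mathbb{T}^1$ and then applying Proposition \ref{periodicity}. The first step is to record an equivariant, stable splitting of $(\mathbb{T}^1)_+$. Taking the fixed point $0 \in \mathbb{T}^1$ as basepoint, the equivariant inclusion $S^0 \hookrightarrow (\mathbb{T}^1)_+$ sending the non-basepoint to $0$ admits the equivariant retraction crushing all of $\mathbb{T}^1$ onto $0$, and the cofiber of this inclusion is $(\mathbb{T}^1)_+/S^0 = \mathbb{T}^1$. Hence the cofiber sequence $S^0 \to (\mathbb{T}^1)_+ \to \mathbb{T}^1$ splits, and $(\mathbb{T}^1)_+$ is, equivariantly and stably, the wedge $\mathbb{T}^1 \vee S^0$.

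Since $\mathbb{T}^n = (\mathbb{T}^1)^{\times n}$ as a space with involution, we have $(\mathbb{T}^n)_+ = ((\mathbb{T}^1)_+)^{\wedge n}$. Smashing $n$ copies of the previous splitting and distributing the smash over the wedge (using that $S^0$ is the unit for $\wedge$) gives, equivariantly up to stable equivalence,
\[
(\mathbb{T}^n)_+ \;\simeq\; S^0 \,\vee\, \bigvee_{k=1}^{n} \binom{n}{k}\,(\mathbb{T}^1)^{\wedge k},
\]
the summand $(\mathbb{T}^1)^{\wedge k}$ occurring with multiplicity $\binom{n}{k}$ according to which $k$ of the $n$ factors contribute a copy of $\mathbb{T}^1$.

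From the identification $\mathbb{T}^1 \wedge X \cong \widetilde{\Sigma} X$ and induction on $k$ we get $(\mathbb{T}^1)^{\wedge k} \cong \widetilde{\Sigma}^k S^0$, so Proposition \ref{periodicity} yields
\[
\widetilde{\KQ}^1\big((\mathbb{T}^1)^{\wedge k}\big) = \widetilde{\KQ}^1\big(\widetilde{\Sigma}^k S^0\big) \cong \widetilde{\KQ}^{1+k}(S^0) = \KQ^{1+k}(\ast) = \KSp^{1+k}(\ast),
\]
the last equality because a point carries the trivial involution, so a quaternionic bundle over it is simply a symplectic one and, the gradings matching under $8$-periodicity, $\KQ^{*}(\ast) = \KSp^{*}(\ast)$. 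Combining this with the splitting, with $\KQ^1(\mathbb{T}^n) = \widetilde{\KQ}^1\big((\mathbb{T}^n)_+\big)$, and noting that the $k=0$ term $\KSp^1(\ast) = 0$ drops out, we obtain
\[
\KQ^1(\mathbb{T}^n) \;\cong\; \bigoplus_{k=1}^{n} \binom{n}{k}\,\KSp^{1+k}(\ast).
\]

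It then remains to read off the answer from the table of $\KSp^{-i}(\ast)$ together with $8$-periodicity: $\KSp^{1+k}(\ast)$ is $\mathbb{Z}_2$ exactly when $k \equiv 1,2 \pmod 8$, is $\mathbb{Z}$ exactly when $k \equiv 3,7 \pmod 8$, and vanishes otherwise. Summing the multiplicities $\binom{n}{k}$ over these residue classes gives the asserted $a_n$ and $b_n$. I do not expect a genuine obstacle; the points that need care are to check that the splitting of $(\mathbb{T}^1)_+$ and the identification $(\mathbb{T}^1)^{\wedge k} \cong \widetilde{\Sigma}^k S^0$ respect the involutions, so that Proposition \ref{periodicity} applies, and to fix the direction of the grading shift in that proposition — a choice that would otherwise interchange the residue classes $\{1,2\}$ and $\{6,7\}$, and which is pinned down by the one-variable check $\KQ^1(\mathbb{T}^1) = \mathbb{Z}/2\mathbb{Z}$ of \eqref{onedim}.
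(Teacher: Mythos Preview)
Your proof is correct and follows essentially the same route as the paper. Both arguments split $\mathbb{T}^n$ (after a suspension or stably) into a wedge of smash powers of $\mathbb{T}^1$ and then apply Proposition~\ref{periodicity} to reduce to $\KSp^*(\ast)$; the only cosmetic difference is that the paper phrases the splitting inductively via the identity $\Sigma(X\times Y)\simeq \Sigma X\vee\Sigma Y\vee\Sigma(X\wedge Y)$, whereas you package it as the stable equivariant splitting $(\mathbb{T}^1)_+\simeq S^0\vee\mathbb{T}^1$ and smash $n$ copies.
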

\begin{proof}
From the discussion above, we see that
\begin{equation*}
\KQ^1(\mathbb{T}^n)=\widetilde{\KQ}(\Sigma^7(\mathbb{T}^n)).
\end{equation*}
Recall the basic identity (see for example \cite{Hat}, Proposition $4I.1$)
\begin{equation*}
\Sigma(X\times Y)= \Sigma X\vee \Sigma Y\vee\Sigma (X\wedge Y),
\end{equation*}
which also holds at the level of space with involutions. Applying this to $\mathbb{T}^n=\mathbb{T}^1\times\mathbb{T}^{n-1}$, by the periodicity theorem we obtain that
\begin{align*}
\widetilde{\KQ}^i(\mathbb{T}^{n})&= \widetilde{\KQ}^{i+1}(\Sigma\mathbb{T}^n)\\
&= \widetilde{\KQ}^{i+1}(\Sigma\mathbb{T}^1)\oplus \widetilde{\KQ}^{i+1}(\Sigma\mathbb{T}^{n-1})\oplus \widetilde{\KQ}^{i+1}(\Sigma(\mathbb{T}\wedge\mathbb{T}^{n-1}))\\
&= \widetilde{\KQ}^{i}(\mathbb{T}^1)\oplus \widetilde{\KQ}^{i}(\mathbb{T}^{n-1})\oplus \widetilde{\KQ}^{i+1}(\mathbb{T}^{n-1}).
\end{align*}
From this it inductively follows that
\begin{equation*}
\KQ^1(\mathbb{T}^n)=\bigoplus_{k=1}^n \widetilde{\KQ}^k(\mathbb{T})^{\oplus {n \choose k}},
\end{equation*}
and the result follows.
\end{proof}

\vspace{0.3cm}
\textbf{An index theorem for quaternionic families. }Consider a quaternionic bundle on a compact Riemannian manifold $E\rightarrow M$, with $j$ acting from the left. A \textit{quaternionic family} $\zeta$ is a continuous family $\{T_p\}_{p\in P}$ of first order elliptic self-adjoint operators
\begin{equation*}
T_p:C^{\infty}(E)\rightarrow C^{\infty}(E)
\end{equation*}
where $P$ is a space with involution $p\mapsto \bar{p}$ and we have
\begin{equation*}
T_p(s\cdot j)=(T_{\bar{p}}s)\cdot j\text{ for every }s\in C^{\infty}(E). 
\end{equation*}
To such an object we can associate a topological index and an analytical index as follows (see \cite{APS}, which treats the classical case without involutions, for more details). Let
\begin{equation*}
\pi:SM\rightarrow M
\end{equation*}
the projection from the unit cotangent bundle. For each $p\in P$, the symbol of $T_p$ defines a self-adjoint automorphisms $\sigma_p$ of the bundle $\pi^* E\rightarrow SM$. This defines a decomposition $\pi^*E=E^+_p\oplus E^-_p$ in positive and negative eigenspaces. In particular, we get a vector bundle
\begin{equation*}
E^+_{\zeta}=\bigcup_{p\in  P} E^+_p\rightarrow SM\times P
\end{equation*}
which is indeed quaternionic for the action of $j$ on $E$, hence a class $[E^+_P]\in \KQ(SM\times P)$. The \textit{symbol class} of $\zeta$ is defined to be the image of this class under the coboundary map
\begin{equation*}
\delta: \KQ^1(SM\times P)\rightarrow \KQ^1(TM\times P).
\end{equation*}
More concretely, this can be described as follows (see Lemma $3.1$ in \cite{APS}).  We say that a symbol is positive/negative if $E^{\mp}_{\zeta}=0$. Two self-adjoint symbols are said to be \textit{stably equivalent} if 
\begin{equation*}
\sigma\oplus \alpha\oplus \beta\approx \sigma'\oplus \alpha'\oplus \beta'
\end{equation*}
with $\alpha,\alpha'$ positive and $\beta,\beta'$ negative. Then the image of the coboundary map $\delta$ is naturally in bijection with stable equivalence classes of self-adjoint symbols. Finally, we define the \textit{topological index} to be the image of the symbol class under the index map
\begin{equation*}
\KQ^1(TM\times P)\rightarrow \KQ^1(P),
\end{equation*}
see \cite{AS4} for more details about the latter.
\par

Fix now a quaternionic separable Hilbert space $(H,j)$, and consider the space $\Op$ of complex linear Fredholm self-adjoint operators. This comes with a natural involution sending the operator $T$ to the operator
\begin{equation*}
v\mapsto -(T(v\cdot j))\cdot j,
\end{equation*}
the fixed points of which are exactly the quaternionic linear operators. With this involution, this is naturally a classifying space for $\KQ^1$, i.e.
\begin{equation}\label{classifying}
\KQ^1(P)=[P,\mathrm{Op}]_{\mathbb{Z}/2\mathbb{Z}}
\end{equation}
where on the right-hand side we consider the homotopy classes of equivariant maps. The proof of this fact follows quite closely the non-equivariant case. First of all, one identifies the space $\mathrm{Fred}_0$ of Fredholm operators on $H$ of index zero (with the same involution as above) as a classifying space for $\widetilde{\KQ}$ (see for example \cite{AtiK}). Then, one shows that there is an equivariant homotopy equivalence
\begin{equation}\label{equiv}
\mathrm{Op}\approx \widetilde{\Omega}\mathrm{Fred}_0
\end{equation}
where, for a space with involution $(X,\tau)$ we denote its $\widetilde{\Omega}X$ loop space (based at a fixed point of $\tau$) equipped with the involution sending a based loop $\gamma(t)$ for $t\in S^1$ to the loop $\tau(\gamma(\bar{t}))$. The operation $\widetilde{\Omega}$ is the adjoint of $\tilde{\Sigma}$. To show (\ref{equiv}), fix a quaternionic linear operator $T_0\in\mathrm{Op}$, and consider for $T\in\mathrm{Op}$ the loop $\gamma_T(t)$ in $\mathrm{Fred}_0$ given by
\begin{equation*}
\gamma_T(t)=\begin{cases}i \cos(t)+\sin(t)T,\quad t\in[0,\pi]\\
i \cos(t)-\sin(t)T_0,\quad t\in[\pi,2\pi],
\end{cases}
\end{equation*}
which we think as based at $t=3/2\pi$.
As (right) multiplication by $i$ is sent by conjugation to $-i$, if we consider the involution sending $t$ to $\pi-t$ $\mathrm{mod}$ $2\pi$ (and $t=3/2\pi$ as the basepoint) we obtain an equivariant map
\begin{equation*}
\mathrm{Op}\rightarrow \widetilde{\Omega}\mathrm{Fred}_0,
\end{equation*}
where the loop space is based at $T_0$. This maps is essentially the one introduced in \cite{ASskew}, hence it is a homotopy equivalence.
\par
By (\ref{classifying}), the quaternionic family of operators $\zeta$ parametrized by the space with involution $P$ determines (after performing a standard trick to make them Fredholm) a class in $\KQ^1(P)$ called the \textit{analytical index}. We then have the following.
\begin{prop}
For a quaternionic family of operators $\zeta$ parametrized by $P$, the topological index coincides with the analytical index in $\KQ^1(P)$.
\end{prop}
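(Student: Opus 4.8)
The plan is to follow the classical strategy of Atiyah--Singer for the index theorem for families (as in \cite{AS4}, \cite{APS}), checking at each step that the constructions are compatible with the quaternionic involution. The essential point is that both indices are natural transformations of cohomology-type functors in $P$ (equivariant functors on spaces with involution), and one reduces to a universal case where the two agree by inspection.

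First I would establish the axiomatic properties of the topological index. Exactly as in the non-equivariant case, the topological index $\mathrm{t\text{-}ind}:\KQ^1(TM\times P)\to\KQ^1(P)$ is built from the Thom isomorphism and the pushforward along an embedding $M\hookrightarrow\mathbb{R}^N$; the only thing to verify is that each of these maps respects the quaternionic structures, which is immediate since tensoring with a (real) Thom class and the Gysin maps are all $\mathbb{C}$-linear and commute with the antilinear $j$ (using the products from Proposition \ref{periodicity}). One checks the standard normalizations: the index is natural in $P$ under equivariant maps, it is additive, it is multiplicative under external products with $\KSp(\ast)$-classes in the sense of \eqref{KOKR}, and it agrees with the ordinary (quaternionic) index when $M$ is a point. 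Simultaneously, one records that the analytical index has the same formal properties: naturality in $P$ is clear from the classifying-space description \eqref{classifying}, additivity from block sums, and the normalization over a point is the statement that $\mathrm{Fred}_0$ classifies $\widetilde{\KQ}$.

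Next I would run the usual reduction. By the multiplicativity and excision properties, it suffices to treat the case where the family of symbols is, fiberwise over $P$, the symbol of the de Rham or Dolbeault operator twisted by a bundle pulled back from $TM\times P$; equivalently, one uses that the symbol class of any quaternionic family lies in the subgroup of $\KQ^1(TM\times P)$ generated (via the $\KQ$--$\KSp$ pairing) by Thom classes, by the quaternionic Thom isomorphism, and here Lemma \ref{torus}-type computations are not needed---only the structural statement that the Thom homomorphism is an isomorphism of $\KQ$-groups, which follows from the $\KR$ and $\KSp$ Thom isomorphisms together with the products in Proposition \ref{periodicity}. On this generating class the two indices are computed directly and seen to coincide, and then naturality propagates the equality to all of $\KQ^1(TM\times P)$.

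The main obstacle I expect is bookkeeping the involution through the analytic side: one must verify that the ``standard trick to make them Fredholm'' (stabilizing the family $\{T_p\}$ and passing to $\mathrm{Op}$) can be performed $\mathbb{Z}/2$-equivariantly, and that the identification of the symbol class with the analytic index---the content of the spectral-section / graph-projection argument of \cite{APS}---goes through with the quaternionic structure intact. Concretely, the zero-set stabilization chooses a finite-dimensional subbundle of $C^\infty(E)\times P$ onto which the cokernels inject; one needs this subbundle to be $j$-invariant, which can be arranged since $E$ is quaternionic and the cokernel bundle is $j$-invariant by the intertwining relation $T_p(s\cdot j)=(T_{\bar p}s)\cdot j$, so averaging or a direct choice produces a quaternionic complement. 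Once this is in place, the remaining homotopy (deforming the stabilized family to the operator whose analytic index is manifestly the symbol class, via the loop $\gamma_T$ of \eqref{equiv}) is equivariant by the same argument that $\mathrm{Op}\approx\widetilde\Omega\mathrm{Fred}_0$, and the proof concludes exactly as in the classical references.
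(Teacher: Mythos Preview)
Your outline is sound, but it takes a longer road than the paper does. You propose to re-run the full Atiyah--Singer axiomatic argument directly in the self-adjoint $\KQ^1$ setting, verifying at each stage (Thom isomorphism, excision, stabilization, generating-class normalization) that the quaternionic involution is preserved. The paper instead makes a single reduction: apply the loop construction $T\mapsto\gamma_T$ (the same one used to show $\mathrm{Op}\approx\widetilde{\Omega}\mathrm{Fred}_0$) to both the family of operators and to the symbol. This converts the self-adjoint quaternionic family over $P$ into an ordinary (non-self-adjoint) quaternionic family of elliptic symbols over $S^1\times P$, giving classes in $\KQ^0(S^1\times P\times TM)$ and $\KQ^0(S^1\times P)$. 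At that point the equality of topological and analytical index is just the real index theorem for families already proved in \cite{AtiR} (transported to $\KQ$ via the $\KR$--$\KSp$ products of Proposition~\ref{periodicity}), and no fresh axiomatic work is required. Your approach is more self-contained and would certainly succeed; the paper's buys brevity by cashing in the desuspension and citing the existing $\KR$ theorem rather than reproving it in the odd-degree quaternionic setting.
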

The proof of the result follows very closely that of the classic index theorem for families of self-adjoint operators (see \cite{APS}, to which we refer for details). For example, using a loop construction as above one obtains a quaternionic family of elliptic symbols giving rise to an element in $\KQ^0(S^1\times X\times TY)$, and similarly the family of operators induces a class in $\KQ^0(S^1\times X)$. From this, one reduces to the usual index theorem (cfr. \cite{AtiR}).

\vspace{0.5cm}

\section{Applications to monopole Floer homology}\label{HS}

We now discuss the implications of the index theorem discussed in the previous section for monopole Floer homology. As a technical remark, in the present section we will be working in the setting of unbounded self-adjoint operators $S_*(H:H_1)$ on a Hilbert space $H$ with a suitable dense subspace $H_1$ introduced in Chapter $33$ of \cite{KM}. This differs from that employed in the previous section (where we considered the space $\mathrm{Op}$) and in the classic papers on index theory (as for example \cite{APS}), but the main results are readily adapted as in \cite{KM}. The reader not too interested about these technical aspects should think of $S(H:H_1)$ as a suitable generalization of the space of compact self-adjoint perturbations of Dirac operators (in which case $H=L^2(Y;S)$ and $H_1=L^2_1(Y;S)$).

\vspace{0.3cm}
\textbf{Families of Dirac operators. }While the results we will discuss hold more in general for $(8k+3)$-dimensional spin manifolds, we will focus on the case of of three-manifolds. Let $Y$ be a three-manifold equipped with a self-conjugate spin$^c$ structure $\spin$, and consider the family of Dirac operators $\{D_B\}_{[B]\in\mathbb{T}}$ parametrized by the $b_1(Y)$-dimensional torus $\mathbb{T}$ of flat connections. The torus $\mathbb{T}$ comes with the natural involution by conjugation (or, equivalently, $x\mapsto -x$), which has exactly $2^{b_1(Y)}$ fixed points corresponding to the spin connections of the elements of $\mathrm{Spin}(\spin)$.
\par
We start by discussing a key example. Suppose $b_1(Y)=1$. We know from (\ref{onedim}) that $\KQ^1(\mathbb{T}^1)=\mathbb{Z}/2\mathbb{Z}$, so that there are exactly two homotopy classes of families of quaternionic operators. These can be described explicity as follows. Given any family over $\mathbb{T}^1$, the operators at $0$ and $\pi$ are quaternionic, so that in particular their kernel is even dimensional over $\mathbb{C}$. This implies that the $\mt$ spectral flow between the two operators is well-defined. Indeed, both cases can be realized: in the case of $S^2\times S^1$ we have even spectral flow, while in the case of the zero surgery on the trefoil we have odd spectral flow (see Chapter $4$ of \cite{Lin}). Hence the $\mt$ spectral flow is a complete invariant for families.
With this in mind, we prove the following.
\begin{prop}\label{key}
Let $\{D_B\}_{[B]\in\mathbb{T}}$ be the family of Dirac operators for $(Y,\spin)$. Then its index in $\KQ^1(\mathbb{T})$ is determined by the triple cup product of $Y$ together with the $\mt$ spectral flow between the Dirac operators corresponding to the spin connections of the elements of $\mathrm{Spin}(\spin)$.
\end{prop}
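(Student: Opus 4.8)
The plan is to apply the index theorem for quaternionic families from Section \ref{Ktheory}, which identifies the index of $\{D_B\}_{[B]\in\mathbb{T}}$ in $\KQ^1(\mathbb{T})$ with its topological index; it then suffices to compute the latter. Fixing a basis of $H^1(Y;\mathbb{Z})$ identifies $\mathbb{T}$ with $\mathbb{T}^n$, $n=b_1(Y)$, and Lemma \ref{torus} then splits $\KQ^1(\mathbb{T}^n)$ into a free part, concentrated in the ``weights'' $k\equiv 3,7\ (\mathrm{mod}\ 8)$, and a $2$-torsion part, concentrated in the weights $k\equiv 1,2\ (\mathrm{mod}\ 8)$. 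I would treat the two parts separately: the free part is cohomological and essentially classical, while the torsion part carries the genuinely new information and is where the work lies.

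For the free part I would apply the cohomological index formula for families of self-adjoint operators (as in \cite{APS}) to the universal Dirac operator over $\mathbb{T}\times Y$. Since $\dim Y=3$ we have $\hat{A}(Y)=1$, and since $\spin=\bar{\spin}$ forces $c_1(\spin)$ to be $2$-torsion, the Chern character of the index class is $\int_Y e^{c_1(\mathcal{L})}$, where $\mathcal{L}\to\mathbb{T}\times Y$ is the Poincar\'e bundle and $c_1(\mathcal{L})$ is the tautological class in $H^1(\mathbb{T})\otimes H^1(Y)$. As $c_1(\mathcal{L})^j$ has equal $\mathbb{T}$- and $Y$-degree, only the term $j=3$ survives fibre integration, producing $\tfrac{1}{6}\int_Y c_1(\mathcal{L})^3$, which is precisely the image of $\cup^3_Y$ in $H^3(\mathbb{T};\mathbb{Z})$. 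So the Chern character is concentrated in $\mathbb{T}$-degree $3$: the free summands of weight $k\geq 7$ are hit trivially, and the weight-$3$ free part is exactly the triple cup product. (This is the $\Pin$-monopole analogue of the computation of $\HMb_*$ in Chapter $35$ of \cite{KM}.) Note in passing that the degree-one component of this Chern character vanishes, so the spectral flow of $\{D_B\}$ around any loop in $\mathbb{T}$ is zero; in particular the $\mt$ spectral flow between any two of the $2^n$ fixed points is well defined, independently of the chosen path.

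For the torsion part, two Dirac families with equal triple cup product differ by a class $t$ in the $2$-torsion subgroup of $\KQ^1(\mathbb{T}^n)$, which I would pin down weight by weight. Restricting $\{D_B\}$ to the $n$ coordinate circles joining fixed points and invoking the case $b_1(Y)=1$ discussed above --- where, by (\ref{onedim}), the $\mt$ spectral flow between the two quaternionic endpoints is a complete invariant --- recovers the weight-$1$ part of $t$ as the collection of $\mt$ spectral flows between the corresponding pairs of spin connections. Restricting instead to the $\binom{n}{2}$ coordinate $2$-tori and running the same argument over $\mathbb{T}^2$ recovers the weight-$2$ part as a fixed combination of the four $\mt$ spectral flows among the four spin connections on such a subtorus --- the quaternionic analogue of the Arf invariant of \cite{Ati}. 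These fix all weight-$1$ and weight-$2$ summands. It then remains to show that the Dirac family hits the higher torsion summands (weights $k\equiv 1,2$ with $k\geq 9$) trivially; this forces the $\mt$ spectral flow, regarded as a function on $\mathrm{Spin}(\spin)$, to be at most cubic, consistently with Proposition \ref{cubic}. Putting the two parts together yields the claim, the remaining ambiguity being an overall additive constant in the spectral flows (equivalently, a grading shift).

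The main obstacle I expect is exactly this last point about the higher torsion summands. Unlike the free part, they are precisely the data not seen by ordinary characteristic classes (this is the whole content of \cite{Ati}), so their vanishing for the Dirac family cannot be reduced to a Stiefel--Whitney computation and requires the mod $2$ family index theorem in an essential way, as in the skew-adjoint case (see also \cite{ASskew}). By contrast, once that input is available, checking well-definedness of the $\mt$ spectral flow and matching the weight-$1$ and weight-$2$ pieces with the linear and quadratic parts of the spectral-flow function should be routine.
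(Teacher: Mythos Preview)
Your treatment of the free part matches the paper's: forget to $\K^1(\mathbb{T})$, take the Chern character, and identify the result with the triple cup product via the index formula for families (this is exactly the computation in Lemma $35.1.2$ of \cite{KM}).

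For the torsion part, however, your plan leaves a genuine gap. You propose to detect only the weight-$1$ and weight-$2$ summands (via coordinate circles and coordinate $2$-tori) and then argue separately that the Dirac family lands trivially in every torsion summand of weight $k\geq 9$. You correctly flag this last step as the obstacle, and indeed you offer no argument for it: invoking ``consistency with Proposition~\ref{cubic}'' is circular, since that proposition is proved by an independent cobordism argument, and in any case cubicity of the spectral-flow function does not by itself force those summands to vanish.

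The paper sidesteps this difficulty entirely, and the missing idea is the natural extension of what you already do in weight $2$. Rather than restricting only to coordinate subtori, for \emph{every} subset $I=\{i_1,\dots,i_k\}$ with $k\equiv 1,2\pmod 8$ one takes the \emph{diagonal} equivariant loop $\gamma_I\subset\mathbb{T}_{i_1}\times\dots\times\mathbb{T}_{i_k}\subset\mathbb{T}$, whose two fixed points are $\s_0$ and $\s_I$. After one suspension the inclusion $\gamma_I\hookrightarrow\mathbb{T}$ factors through
\[
\gamma_I\hookrightarrow\mathbb{T}_{i_1}\wedge\dots\wedge\mathbb{T}_{i_k},
\]
and the key observation is that this map induces an \emph{isomorphism} on $\KQ^1$: for $k=2$ one reads it off from the ring structure of $\KSp^*(\ast)$, and for general $k$ it follows from the periodicity theorem (Proposition~\ref{periodicity}). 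Hence restriction to the loops $\{\gamma_I\}$ is upper-triangular (with respect to inclusion of index sets) with isomorphisms on the diagonal, so the full collection of $\mt$ spectral flows $\s_0\leadsto\s_I$, for \emph{all} $I$ and not merely $|I|\leq 2$, determines every torsion summand. No vanishing statement for the higher-weight summands is needed, and none is asserted.
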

\begin{proof}
Consider the map
\begin{equation}\label{forget}
\KQ^1(\mathbb{T})\rightarrow \K^1(\mathbb{T})
\end{equation}
which forgets the involution. This map is injective after tensoring with $\mathbb{Q}$. This is clear from the computation of the first group (see Lemma \ref{torus}), and the fact that statement is true for the natural map $\KSp^i(*)\rightarrow \K^i(*)$. Of course, this map simply sends the family to its index (as a family of self-adjoint operator), which is in turn determined via the Chern character
\begin{equation*}
\mathrm{ch}:K^1(\mathbb{T})\rightarrow H^{\mathrm{odd}}(\mathbb{T};\mathbb{Q})
\end{equation*}
by the triple cup product, as it follows from the usual index theorem for families (see Lemma $35.1.2$ of \cite{KM} for an explicit computation). An important point here is that $\K^1(\mathbb{T})$ is torsion-free, so no information is lost by taking the Chern character. Hence we only need to understand the torsion of $\KQ^1(\mathbb{T})$, which is according to Lemma \ref{torus} a suitable direct sum of $\mathbb{Z}/2\mathbb{Z}$s.
\par
Pick a basis $\x_1,\dots, \x_n$ of $H^1(Y,\mathbb{Z})$. This determines a splitting in one-dimensional factors $\mathbb{T}=\mathbb{T}_1\times \cdots \times \mathbb{T}_n$, and (choosing a base spin structure $\s_0$) we can also use it to identify the set $\mathrm{Spin}(\spin)$ with subsets of $\{1,\dots,n\}$. From the computation in Lemma \ref{torus} we then know that the summands $\mathbb{Z}/2\mathbb{Z}$ to $\KQ^1(\mathbb{T})$ arise from wedge summands of the form
\begin{equation*}
\mathbb{T}_{i_1}\wedge\dots\wedge \mathbb{T}_{i_k}
\end{equation*}
where $k=1$ or $2$ modulo $8$. Consider now a standard equivariant loop $\gamma$ whose fixed points are precisely $\s_0$ and the spin structure corresponding to $\{i_1,\dots, i_k\}$. Then the suspension of the inclusion $\gamma\hookrightarrow\mathbb{T}$ factors through the suspension of
\begin{equation*}
\gamma\hookrightarrow\mathbb{T}_{i_1}\wedge\dots\wedge \mathbb{T}_{i_k}.
\end{equation*}
The latter induces an isomorphism on $\KQ^1$: this follows for $k=2$ from the ring structure of ${\KSp}^*(*)$, and in general from the periodicity theorem (see Proposition \ref{periodicity}). Hence the component in each $\mathbb{Z}/2\mathbb{Z}$ summand can be interpreted in terms of the restriction of the family to a certain equivariant loop $\gamma$, which is in turn determined by a certain $\mt$ spectral flow by the discussion on the case $b_1=1$.
\end{proof}

\vspace{0.3cm}
\textbf{Coupled Morse homology. }Given a family of operators $L$ in $S_*(H:H_1)$ parametrized by a smooth manifold $Q$, in Chapter $33$ of \cite{KM} the authors construct the coupled Morse homology $\bar{H}_*(Q,L)$, which is a relatively graded module over $\ztwo[U]$ with $U$ of degree $-2$. We can assume in our context that the family has no spectral flow around loops in $Q$, so that the grading is absolute. We very quickly recall its definition, and refer the reader to \cite{KM} for more details. Fix a metric and choose a generic Morse function $f$ on $Q$. After a small perturbation of the family, we can assume that the following genericity assumption holds:
\begin{enumerate}[($\ast$)]
\item for all critical points $q$ of $f$, the operator $L_q$ has no kernel and simple spectrum.
\end{enumerate}
The chain complex for coupled Morse homology $\bar{C}_*(Q,L)$ is generated over $\ztwo$ by the projectivizations of the eigenspaces of the operators at the critical points (which are all one-dimensional by ($\ast$)). We then look at equivalence classes of pairs of paths $(\gamma(t),\phi(t))$ where $\gamma$ is a Morse trajectory for $f$ and $\phi$ is a path in the unit sphere of $H$ satisfying a given differential equation. The differential counts the number of these paths in zero dimensional moduli spaces.
The two key properties of this construction are the following:
\begin{itemize}
\item The coupled Morse homology $\bar{H}_*(Q,L)$ only depends on the homotopy class of the family $L$, hence on the corresponding element in $\K^1(Q)=[Q,S_*(H:H_1)]$.
\item If $(Y,\spin)$ is a three-manifold equipped with a torsion spin$^c$ structure, the result of the construction applied to the family of Dirac operators parametrized by flat connections $\{D_B\}_{[B]\in\mathbb{T}}$ is $\HMb_*(Y,\spin)$.
\end{itemize}

An analogous construction can be performed if the manifold $Q$ comes with an involution $\tau$ and the family of operators $L$ is quaternionic for this involution. We will assume that the fixed points of $\tau$ are isolated, so that it is locally modeled on $x\mapsto -x$. The main complication is that now the operators at the fixed points of the action are quaternionic-linear, so the transversality assumption $(*)$ cannot be achieved (respecting the involution) as the eigenspaces will always be even-dimensional over $\mathbb{C}$. The problem can be solved, as in \cite{Lin}, by allowing Morse-Bott singularities of a very specific kind. Indeed, generically the operators at the fixed points of $\tau$ will have no kernel and two-dimensional eigenspaces: each of these will give rise, after projectivization, to a copy of $S^2$ on which the involution $j$ acts as the antipodal map. With this in mind, the construction of $\Pin$-monopole Floer from \cite{Lin} carries over without significant differences (see also \cite{Lin3} for an introduction): indeed, as we do not have to deal with boundary obstructedness phoenomena, the technical details are significantly easier in this case. The output is a version of the chain complex $\bar{C}_*(Q,L)$ whose homology is $\bar{H}_*(Q,L)$ which is equipped with a natural chain involution. We then define $\bar{H}^{\tau}_*(Q,L)$ to be the homology of the invariant subcomplex. This is naturally a module over $\Rin$. We record the main features of this construction in the following result, whose proofs follows along the lines of the results cointained in \cite{Lin} (and in fact much simpler).
\begin{prop}For any quaternionic family of operators $L$ in $S_*(H:H_1)$ on $(Q,\tau)$ there is a well-defined equivariant coupled Morse homology group $\bar{H}^{\tau}_*(Q,L)$, which is an absolutely graded module over $\Rin$. The following properties hold:
\begin{itemize}
\item $\bar{H}^{\tau}_*(Q,L)$ only depends on the homotopy class of the quaternionic family, hence on the corresponding element in $\KQ^1(Q)=[Q,S_*(H:H_1)]_{\mathbb{Z}/2\mathbb{Z}}$;
\item If $(Y,\spin)$ is a three-manifold equipped with a self-conjugate spin$^c$ structure, the result of the construction applied to the family of Dirac operators parametrized by flat connections $\{D_B\}_{[B]\in\mathbb{T}}$ is $\HSb_*(Y,\spin)$.
\end{itemize}
\end{prop}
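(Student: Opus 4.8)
The plan is to adapt the construction of coupled Morse homology from Chapter $33$ of \cite{KM}, together with the $\Pin$-equivariant refinements of \cite{Lin}, to the abstract setting of an arbitrary quaternionic family $L$ on $(Q,\tau)$ rather than the specific family of Dirac operators. First I would fix a $\tau$-invariant metric on $Q$ and a $\tau$-invariant Morse function $f$; since $\mathrm{Fix}(\tau)$ is isolated (locally modeled on $x\mapsto -x$), the critical points split into those away from $\mathrm{Fix}(\tau)$, occurring in pairs interchanged by $\tau$, and those lying in $\mathrm{Fix}(\tau)$, which are $\tau$-fixed. After an equivariant perturbation of $L$, I would arrange that away from $\mathrm{Fix}(\tau)$ each $L_q$ has no kernel and simple spectrum, while at a fixed critical point $q$ the operator $L_q$, being quaternionic-linear, has no kernel and all eigenspaces two-dimensional over $\mathbb{C}$. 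Projectivizing eigenspaces then produces the generators of the complex $\bar C_*(Q,L)$: a point for each eigenvalue at a generic critical point, and a copy of $\mathbb{CP}^1\cong S^2$ (with $j$ acting antipodally) for each eigenvalue at a fixed critical point. This is precisely the Morse--Bott picture of \cite{Lin}, and the count of pairs $(\gamma,\phi)$ of Morse trajectories and paths in the sphere satisfying the coupling equation, together with the verification $\partial^2=0$, carries over from there; the absence of boundary-obstructed strata makes the requisite compactness and gluing analysis strictly simpler than in \cite{Lin}.

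Next I would install the involution. The conjugation $\tau$ together with the right $j$-action on $H$ induces a chain involution on $\bar C_*(Q,L)$, and following \cite{Lin} one passes to a model for the invariant subcomplex, whose homology $\bar H^\tau_*(Q,L)$ acquires the $\Rin$-module structure, with $V$ coming from the $U$-action of the ambient coupled Morse homology and $Q$ from the fundamental classes of the $S^2$-strata at the fixed points (and $Q^3=0$ as in \cite{Lin}). Since we have arranged that the family has no spectral flow around loops in $Q$, the relative grading lifts to an absolute one. For homotopy invariance I would run the standard continuation argument equivariantly: a homotopy of quaternionic families through $S_*(H:H_1)$ induces, after a generic equivariant perturbation, a chain map of coupled complexes commuting with the involution, hence a map of invariant subcomplexes; a homotopy of homotopies yields an equivariant chain homotopy; and composing with the reverse homotopy gives the identity up to chain homotopy. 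Thus $\bar H^\tau_*(Q,L)$ depends only on the equivariant homotopy class of $L$, i.e.\ on its class in $\KQ^1(Q)=[Q,S_*(H:H_1)]_{\mathbb{Z}/2\mathbb{Z}}$; independence of the auxiliary choices of metric, Morse function and perturbation is the special case of a constant homotopy.

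For the identification with $\HSb_*$, I would observe that when $(Q,\tau)=(\mathbb{T},\text{conj})$ and $L=\{D_B\}$ is the family of Dirac operators parametrized by flat connections, the coupled Morse complex is by construction the complex computing the bar-flavor of $\Pin$-monopole Floer homology from \cite{Lin}: the reducible locus of the Seiberg--Witten configuration space over $\mathbb{T}$ is the blow-up of the sphere bundles of the eigenspaces of $\{D_B\}$, the coupled trajectory equation is the reducible flow equation, and the chain involution is the one induced by the $j$-symmetry of the Seiberg--Witten equations. Matching the two constructions term by term, with the Morse--Bott $S^2$-strata at the $\tau$-fixed points corresponding to the reducible critical submanifolds over the spin connections, gives $\bar H^\tau_*(\mathbb{T},\{D_B\})=\HSb_*(Y,\spin)$.

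I expect the main obstacle to be the Morse--Bott transversality and gluing package at the $\tau$-fixed critical points: one must show that the prescribed two-dimensional eigenspace degeneracies can be achieved equivariantly and generically, that the moduli spaces of coupled trajectories are cut out transversally after an equivariant perturbation, and that their compactifications have the expected codimension-one boundary so that $\partial^2=0$ and the continuation maps are well-defined. All of this is carried out in \cite{Lin} in the Seiberg--Witten setting, and the claim is that it transfers essentially verbatim—indeed more easily, since there is no boundary obstructedness—to the present abstract framework; making that transfer precise, rather than any genuinely new idea, is where the work lies.
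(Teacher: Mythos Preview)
Your proposal is correct and follows essentially the same approach as the paper: the paper does not give a detailed proof, but in the paragraph preceding the proposition it sketches exactly the construction you describe (equivariant Morse function, generic two-dimensional eigenspaces at fixed points giving $S^2$'s with antipodal $j$-action, Morse--Bott chain complex with a chain involution, homology of the invariant subcomplex), and then simply records that the proofs ``follow along the lines of the results contained in \cite{Lin} (and in fact much simpler)'' because there is no boundary obstructedness. Your outline is a faithful and somewhat more detailed elaboration of this.
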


Putting the pieces together, we can finally prove the main result of the present paper.
\begin{proof}[Proof of Theorem \ref{main}]
Using the equivariant coupled Morse homology introduced above, we see that $\HSb_*(Y,\spin)$ only depends on the homotopy class of the quaternionic family of operators $\{D_B\}_{[B]\in\mathbb{T}}$, hence by Proposition \ref{key} only on the triple cup product on $Y$ and the $\mt$ spectral flow between the Dirac operators corresponding to elements of $\mathrm{Spin}(\spin)$. So, we only need to show that the latter are determined by the Rokhlin invariants of the spin structures. This can be seen by looking at the absolute gradings in the chain complex (as introduced in Chapter $28$ of \cite{KM}). Choose a standard equivariant Morse function $f$ on $\mathbb{T}$, so that its $2^{b_1(Y)}$ critical points correspond to the elements of $\mathrm{Spin}(\spin)$. Given two spin structures $\s_0$ and $\s_1$ with corresponding spin connections $B_0$ and $B_1$, we have that the zero-dimensional chains in a stable critical submanifold $C_i$ over $(B_i,0)$ have relative grading
\begin{equation*}
\mathrm{ind}_f(\s_0)-\mathrm{ind}_f(\s_1)-2\mathrm{sf}(D_{B_0},D_{B_1})\text{ mod }4.
\end{equation*}
On the other hand, zero-dimensional chains in $C_i$ have absolute grading $-\sigma(W_i)/4+\mathrm{ind}_f(\s_i)$ modulo $4$, where $W_i$ is any manifold whose boundary is $Y$ on which $\s_i$ extends. For this computation we exploit the fact that on a spin four-manifold the Dirac operator is quaternionic, so that its (real) index is divisible by $4$ (see also Chapter $4$ of \cite{Lin}). Comparing this with the formula above, we see
that the $\mt$ spectral flow between $D_{B_0}$ and $D_{B_1}$ is exactly the difference between the Rokhlin invariants of $\s_0$ and $\s_1$.
\end{proof}

\vspace{0.3cm}
\textbf{Rokhlin invariants as a cubic map.} The rest of this section is devoted to understand the structure of the Rokhlin invariants of a pair $(Y,\spin)$, and in particular to prove Proposition \ref{cubic}. First, recall some features of spin manifolds in low-dimensions (see for example \cite{Kir}). In dimension one, the spin cobordism group is $\Omega_1^{\mathrm{Spin}}=\mathbb{Z}/2\mathbb{Z}$, the generator being the trivial double cover of the circle (see Figure \ref{spincircle}). We will refer to this as the \textit{Lie structure}, and denote it by $\s_{\mathrm{Lie}}$. In dimension two, $\Omega_2^{\mathrm{Spin}}=\mathbb{Z}/2\mathbb{Z}$, and the spin cobordism class is determined by the Arf invariant of $(\Sigma,\s)$. More precisely, by restricting to loops the spin structure $\s$ determines a map
\begin{equation*}
q:H_1(\Sigma,\mathbb{Z})\otimes \ztwo\rightarrow \Omega_1^{\mathrm{Spin}}=\mathbb{Z}/2\mathbb{Z}
\end{equation*}
which is a quadratic refinement of the intersection product. The Arf invariant of this quadratic form in $\mathbb{Z}/2\mathbb{Z}$ is then the spin cobordism class of $(\Sigma,\s)$.

\begin{figure}
  \centering
\def\svgwidth{0.8\textwidth}
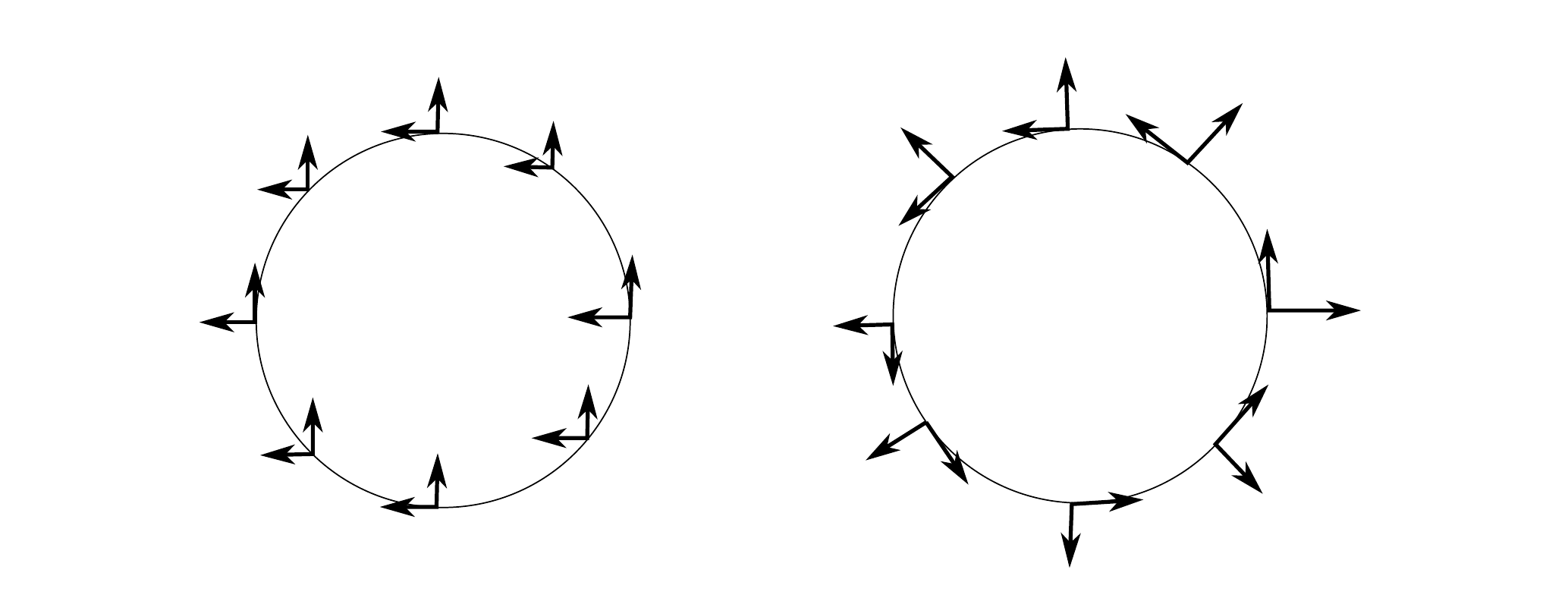
    \caption{The two spin structures on the circle. The one on the left corresponds to the \textit{non}-trivial double cover of the circle, and extends to the disk. The one of the right is the Lie structure, it corresponds to the trivial double cover of the circle, and does not extend.}
    \label{spincircle}
\end{figure} 

Consider now the three-manifold $Y=S^1\times S^1\times S^1$. This has $8$ spin structures, $7$ with Rokhlin invariant $0$ and one with Rokhlin invariant $1$. All the spin structures are all products of spin structures on the circle. If at least one of the factors is not the Lie structure, then it extends to a manifold diffeomorphic to $S^1\times S^1\times D^2$, which has signature zero. On the other hand, the product of the Lie structures (which we denote by $\s_0=\s_{\mathrm{Lie}}\times \s_{\mathrm{Lie}}\times \s_{\mathrm{Lie}}$) extends to a manifold with signature $-8$, namely the complement of a regular fiber of the elliptic fibration $E(1)\rightarrow \mathbb{C}P^1$ (see \cite{Kir}). With this in mind, we can proceed on the proof of Proposition \ref{cubic}.

\begin{proof}[Proof of Proposition \ref{cubic}]
Consider two spin structures $\s$ and $\s'$ in $\mathrm{Spin}(\spin)$, so that they differ by an element $\x\in H^1(Y;\mathbb{Z})\otimes \ztwo$. Let $\Sigma\subset Y$ be an oriented surface Poincar\'e dual to $\x$. We claim that the difference of their Rokhlin invariants satisfies
\begin{equation*}
\mu(\s)-\mu(\s')=[\s\lvert_{\Sigma}]\in \Omega_2^{\mathrm{Spin}}=\mathbb{Z}/2\mathbb{Z}.
\end{equation*}
To show this, we construct a suitable spin cobordism between $(Y,\s)$ and $(Y,\s')$ as follows. Consider the manifold $Y\times [-1,1]$, with fixed spin structures $\s$ and $\s'$ at the boundary. Then $\Sigma\times\{0\}$ is a characteristic surface (in a relative sense): there is a spin structure on its complement restricting to $\s$ and $\s'$ at the boundaries and which induces the non-trivial element in $\Omega_1^{\mathrm{Spin}}$ on the unit circle of a normal fiber of $\Sigma$. The proof of this is a direct generalization of the closed case, see \cite{Kir}. Consider now $\partial\mathrm{nbhd}(\Sigma\times\{0\})$, which is naturally identified with $S^1\times \Sigma$. This has the induced spin structure $\s_{\mathrm{Lie}}\times \s\lvert_{\Sigma}$. Hence to find a spin cobordism from $(Y,\s)$ to $(Y,\s')$ it suffices to find a spin manifold whose boundary is $S^1\times \Sigma$ on which this spin structure extends, and glue it in. The example of the three-torus discussed above (which is the case $S^1\times T^2$) readily implies that if $\s\lvert_{\Sigma}$ is trivial, one can find an extension to a manifold with $\sigma=0$ modulo $16$, and if $\s\lvert_{\Sigma}$ is not, the one can find an extension to a manifold with $\sigma=8$ modulo $16$. This is because, from the properties of the Arf invariant, writing $\Sigma=\hash^g T^2$, $\s\lvert_{\Sigma}$ bounds if and only if the number of restriction to the summands $\s\lvert_{T^2}$ that do not bound is even.
\par
To conclude the proof, we need to show that the map
\begin{align*}
H^1(Y;\mathbb{Z})\otimes \ztwo&\rightarrow \mathbb{Z}/2\mathbb{Z}\\
\x&\mapsto [\s\lvert_{\mathrm{PD}(\x)}]
\end{align*}
is cubic. To see this, fix a basis $\x_1,\dots, \x_n$, with transverse dual surfaces $\Sigma_1,\dots, \Sigma_n$. Then if $\x=\sum \lambda_i \x_i$, and $\Sigma$ is dual to $\x$, we have
\begin{equation*}
[\s\lvert_{\Sigma}]=\sum \lambda_i [\s\lvert_{\Sigma_i}]+\sum \lambda_i\lambda_j[\s\lvert_{\Sigma_i\cap \Sigma_j}]+\sum \lambda_i\lambda_j\lambda_k[\s\lvert_{\Sigma_i\cap \Sigma_j\cap\Sigma_k}],
\end{equation*}
as it can be seen directly from a cut and paste argument (see for example \cite{Kir}). Finally, the term $[\s\lvert_{\Sigma_i\cap \Sigma_j\cap\Sigma_k}]$ is clearly the triple cup product $\langle \x_i\cup \x_j\cup \x_k,[Y]\rangle$ modulo two.
\end{proof}

Indeed, the proof shows that not only the function is cubic (with cubic part determined by triple cup product), but also that the linear and quadratic coefficients can be determined very explicitly in terms of embedded surfaces. For example, in the case of the three-torus discussed above, fix $\s_0$ as the base spin structure. Using this to identify the set of spin structures with $H^1(Y;\mathbb{Z})\otimes \ztwo$, the map $\mu$ is $1$ on any non-zero element. Denoting by $\x_i$ the generator of the circle in the $i$th factor, we see that this is the cubic map
\begin{equation*}
\sum \lambda_i \x_i\mapsto 1+\lambda_1+\lambda_2+\lambda_3+\lambda_1\lambda_2+\lambda_2\lambda_3+\lambda_1\lambda_3+\lambda_1\lambda_2\lambda_3.
\end{equation*}
The coefficients can be interpreted topologically as follows. First consider a surface $\Sigma_i$ Poincar\'e dual to $\x_i$. Then $\s_0\lvert_{\Sigma_i}$ is a product of Lie structures, hence it has Arf invariant $1$, which corresponds to the coefficient of $\lambda_i$. Furthermore, for $i\neq j$ the intersection $\Sigma_i\cap \Sigma_j$ has induced the Lie structure, hence it is $1\in \Omega_1^{\mathrm{Spin}}$. This corresponds to the coefficient of $\lambda_i\lambda_j$.

\vspace{0.5cm}

\section{Examples}\label{examples}
We now discuss some explicit computations of the group $\HSb_*$ in terms of the topological data arising from Theorem \ref{main}. We start by recalling the simplest cases of manifolds with $b_1=0$ and $1$, which were worked out in Chapter $4$ of \cite{Lin}. Define
\begin{equation*}
\tilde{\Rin}=\ztwo[V^{-1},V,Q]/Q^3,
\end{equation*}
which is naturally a module over $\Rin$, and
\begin{equation*}
\mathcal{I}=\ztwo[V^{-1},V]\oplus \ztwo[V^{-1},V]\langle-1\rangle
\end{equation*}
where the action of $Q$ is an isomorphism from the first tower onto the second. Here, given a graded module $M$, we denote by $M\langle d\rangle $ the module obtained by shifting the degrees up by $d$, i.e. $M\langle d\rangle_i=M_{d-i}$. When $b_1=0$, the final result is simply the direct sum of the homologies of the critical submanifolds, hence up to a total grading shift
\begin{equation*}
\HSb_*(Y,\spin)=\tilde{\Rin}.
\end{equation*}
When $b_1=1$, as mentioned in the previous section, there are two cases corresponding to the two elements of $\KQ^1(\mathbb{T})=\mathbb{Z}/2\mathbb{Z}$. We can pick a standard equivariant function on $\mathbb{T}$ with exactly two critical points. We denote by $\s$ and $\s'$ its maximum and minimum. If the two spin structures have the same Rokhlin invariant (so that there is no $\mt$ spectral flow between the spin two Dirac operators), for each critical submanifold $C$ over $\s$ there is a critical submanifold $C'$ over $\s'$ lying in degree one less; furthermore the moduli space of trajectories between $C$ and $C'$ consists of two copies of $\mathbb{C}P^1$, each mapping diffeomorphically onto the images under the evaluation map, so that
\begin{equation*}
\HSb_*(Y,\spin)=\tilde{\Rin}\otimes H^1(S^1;\ztwo).
\end{equation*}
In the case they have different Rokhlin invariants, for each critical submanifold $C$ over $\s$ there is a critical submanifold $C'$ over $\s'$ lying in degree one more; and the moduli space of trajectories between them consist of two points, inducing multiplication by $Q^2$ in homology. Hence, the final result is 
\begin{equation*}
\HSb_*(Y,\spin)=\mathcal{I}\oplus\mathcal{I}\langle2\rangle.
\end{equation*}
The first case is realized for example by manifolds obtained by zero surgery on a knot $K\subset S^3$ with $\mathrm{Arf}(K)$ zero, while the latter happens when $\mathrm{Arf}(K)$ is one.
\vspace{0.3cm}

With this computations in mind, we can prove a general result as follows. Here, we fix a basis $\x_1,\dots, \x_n$ of $H^1(Y;\mathbb{Z})\otimes \ztwo$, and a base spin structure $\s_0$. Given a subset $I\subset\{1,\dots,n\}$, we denote by $\s_I$ the spin structure $\s_0+\sum \lambda_i \x_i$, where $\lambda_i=1$ iff $i\in I$. We denote by $|I|$ the cardinality of $I$. 
\begin{prop}\label{spectralseq}
There exists a spectral sequence converging to $\HSb_*(Y,\spin)$ whose $E^1$ page is
\begin{equation*}
\sum_{I}\tilde{\Rin}\langle -2\mu(\s_I)+|I|)\rangle
\end{equation*}
so that the differential $d^1$ has a non-zero component
\begin{equation*}
\tilde{\Rin}\langle -2\mu(\s_I)+|I|\rangle\rightarrow\tilde{\Rin}\langle -2\mu(\s_{I'})+|I'|\rangle
\end{equation*}
if and only $I'\subset I$, $|I'|=|I|-1$ and $\mu(\s_I)\neq\mu(\s_{I'})$, in which case it is (up to grading shift) multiplication by $Q^2$.
\end{prop}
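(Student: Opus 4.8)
The plan is to realize the asserted spectral sequence as the one associated to the Morse-index filtration on the equivariant coupled Morse complex, to identify its $E^1$ page with the sum of the local contributions of the critical submanifolds, and to compute $d^1$ by localizing to the coordinate circles and invoking the $b_1=1$ computations recalled above. To set this up, I would fix a basis $\x_1,\dots,\x_n$ of $H^1(Y;\mathbb{Z})$, which as in the proof of Proposition \ref{key} splits $\mathbb{T}=\mathbb{T}_1\times\cdots\times\mathbb{T}_n$ and identifies $\mathrm{Spin}(\spin)$ with the subsets of $\{1,\dots,n\}$, and then choose a standard equivariant Morse function $f$ on $\mathbb{T}$ whose critical points are exactly the $2^n$ fixed points $\s_I$, with $\mathrm{ind}_f(\s_I)=|I|$ and with index-one trajectories out of $\s_I$ running along the coordinate circles, joining $\s_I$ to the points $\s_{I'}$ with $I'\subset I$ and $|I'|=|I|-1$ (two trajectories for each such $I'$). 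On the equivariant coupled Morse complex $\bar{C}^{\tau}_*(\mathbb{T},\{D_B\})$ computing $\HSb_*(Y,\spin)$ I would let $F_p$ be the subspace generated by the projectivized eigenspaces lying over critical points of index at most $p$. Since gradient trajectories strictly decrease $f$ and the coupled part of the differential does not change which critical points are joined, each $F_p$ is a subcomplex, and the resulting bounded filtration gives a spectral sequence converging to $\HSb_*(Y,\spin)$.

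Next I would identify the pages. The associated graded $E^0$ is the direct sum over $I$ of the local complexes generated by the projectivized (necessarily even-dimensional) eigenspaces of the single quaternionic operator $D_{B_I}$ together with its internal differential; this is precisely the $b_1=0$ situation applied at each fixed point, so its homology is $\tilde{\Rin}$, and by the grading computation in the proof of Theorem \ref{main} the zero-dimensional chains over $(B_I,0)$ lie in degree $-\sigma(W_I)/4+\mathrm{ind}_f(\s_I)=-2\mu(\s_I)+|I|$ modulo $4$. This yields
\begin{equation*}
E^1=\bigoplus_{I\subseteq\{1,\dots,n\}}\tilde{\Rin}\langle-2\mu(\s_I)+|I|\rangle,
\end{equation*}
with $d^1$ induced by the part of the differential that drops the index by exactly one; by the choice of $f$ its only possibly non-zero components are the maps $\s_I\to\s_{I'}$ with $I'\subset I$ and $|I'|=|I|-1$.

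Finally, to evaluate these components I would fix such a pair, say $I'=I\setminus\{i\}$, and let $\mathbb{T}_i\subset\mathbb{T}$ be the coordinate circle through $\s_I$ and $\s_{I'}$, with its conjugation involution and the two fixed points $\s_I,\s_{I'}$. Using homotopy invariance of equivariant coupled Morse homology I would first homotope $\{D_B\}$, without changing $\HSb_*$, so that near the two Morse trajectories joining $\s_I$ to $\s_{I'}$ it is pulled back from its restriction to $\mathbb{T}_i$; then the local count defining the $(I,I')$-component of $d^1$ agrees with the $d^1$ of the index filtration for the one-dimensional family $\{D_B\}|_{\mathbb{T}_i}$. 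The class of this one-dimensional family in $\KQ^1(\mathbb{T}_i)=\mathbb{Z}/2\mathbb{Z}$ is, by the wedge-summand analysis in the proof of Proposition \ref{key} in the case $k=1$, detected by the $\mt$ spectral flow between $D_{B_I}$ and $D_{B_{I'}}$, which by the proof of Theorem \ref{main} equals $\mu(\s_I)-\mu(\s_{I'})$. When this vanishes, the first of the two $b_1=1$ cases recalled above applies and the relevant $d^1$ vanishes; when it is non-zero, the second case applies and $d^1$ is, up to the grading shift dictated by the $E^1$-degrees, multiplication by $Q^2$. This is exactly the asserted description.

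The step I expect to be the main obstacle is the localization in the last paragraph: one has to verify that the coupled configurations contributing to the $(I,I')$-component of $d^1$ see only the germ of the family along the circle $\mathbb{T}_i$, so that the count genuinely reduces to the purely one-dimensional model. This is a gluing argument in the spirit of those in \cite{KM} and \cite{Lin}, and should be routine once the family has been put in product form near the relevant trajectories, but it is the one point requiring real care. I would also note that only $E^1$ and $d^1$ are being described here; the higher differentials, into which the triple cup product of $Y$ enters through Proposition \ref{cubic}, are neither asserted to vanish nor needed for this statement.
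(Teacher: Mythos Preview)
Your argument is correct and follows essentially the same route as the paper: filter the equivariant coupled Morse complex by the Morse index of a standard equivariant function on $\mathbb{T}$, identify $E^1$ with the sum of the local $b_1=0$ contributions, and read off $d^1$ from the $b_1=1$ computations along the index-one flow lines. One remark: the localization step you flag as the main obstacle is in fact automatic with the standard product Morse function, since each index-one trajectory literally lies in a coordinate circle $\mathbb{T}_i$, so the coupled differential over it depends only on the restriction of the family to that circle---no preliminary homotopy or germ argument is needed.
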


\begin{proof}To compute the Floer homology group we can proceed as in \cite{Lin} and use an equivariant Morse function
\begin{equation*}
f:\mathbb{T}\rightarrow \mathbb{R}
\end{equation*}
to perturb the equations. We can choose $f$ to be a standard Morse function on the torus, whose critical points correspond to the $2^{b_1(Y)}$ fixed points of the involution, hence to the set of spin structures $\mathrm{Spin}(\spin)$. Using our basis, we can also choose the function so that the index of the critical point corresponding to $\s_I$ is $|I|$. We can filter the Floer chain complex according to the index of the corresponding critical point of $f$. The $E^1$ page is then the direct sum of the homology of the critical submanifolds, so that one obtains the first part of the statement after recalling that the index of the critical point contributes to the grading shift; the statement regarding the differentials $d_1$ follows from our computations in the case $b_1=1$. \end{proof}

This corollary does not provide an explicit computation for the group, but just a spectral sequence for which we know explicitly the $E^2$-page. Indeed, also the explicit general computation of \cite{KM} assumes rational coefficients and exploits the formality of the de Rham cohomology of $\mathbb{T}$. In our setting, unlike the case of \cite{KM}, the differential $d_2$ is generally non-zero, as we will see explicitly in an example.

\vspace{0.3cm}
\textbf{$\Pin$-standard manifolds. }We say that a three-manifold $Y$ equipped with a self-conjugate spin$^c$ structure $\spin$ is $\Pin$-\textit{standard} if the triple cup product of $Y$ vanishes and the spin structures in $\mathrm{Spin}(\spin)$ all have the same Rokhlin invariant. We claim that in this case we have
\begin{equation*}
\HSb_*(Y,\spin)=\tilde{\Rin}\otimes H^1(\mathbb{T};\ztwo).
\end{equation*}
Indeed, our main theorem implies that, up to grading shift,
\begin{equation*}
\HSb_*(Y,\spin)=\HSb_*(\hash^{b_1(Y)}S^2\times S^1,\spin_0)
\end{equation*}
where $\spin_0$ is the unique torsion spin$^c$ structure. The latter can be computed for example using the connected sum spectral sequence (see \cite{Lin2}). Indeed, we know
\begin{equation*}
\HSb_*(S^2\times S^1,\spin_0)=\tilde{\Rin}\otimes H^1(S^1;\ztwo)
\end{equation*}
and as this is free over $\tilde{\Rin}$, the invariant of the connected sum is simply the tensor product over $\Rin$ of the invariants (as the spectral sequence collapses at the $E^2$-page).
\vspace{0.3cm}

\textbf{Manifolds with $b_2=2$. }In this case the triple cup product vanishes, so that the invariant is determined by the Rokhlin invariants. If all of them coincide, then the manifold is $\Pin$-standard so that the result discussed above holds.  On the other hand, the cup product of two basis elements of $H^1(Y;\mathbb{Z})$ has to be zero by Poincar\'e duality. Hence the cubic form from Theorem \ref{cubic} has to be linear, so that the four Rokhlin invariants coincide in pairs. In particular, in light of Theorem \ref{main}, we can compute $\HSb_*(Y,\spin)$ as the invariant for the manifold obtained by zero surgeries on each component of a split link, one component being a trefoil and one component being unknotted. For this case
\begin{equation*}
\HSb_*(Y,\spin)=\left(\mathcal{I}\oplus\mathcal{I}\langle2\rangle\right)\otimes H^1(S^1;\ztwo),
\end{equation*}
as it follows by looking at the connected sum spectral sequence.

\vspace{0.3cm}

\textbf{Manifolds with $b_2=3$. }There are several cases to discuss. First of all, if we pick a basis $\x_1,\x_2,\x_3\in H^1(Y;\mathbb{Z})$, the value
\begin{equation*}
\langle \x_1\cup \x_2\cup \x_3,[Y]\rangle\in \mathbb{Z}
\end{equation*}
is well-defined (up to sign). With a little abuse of terminology, we will refer to $m$ as the triple cup product of $Y$. Recall that examples of three-manifolds with triple cup product $m$ can be provided by the construction in \cite{RS} by doing surgery on a band sum of $m$ copies of the Borromean rings, see Figure \ref{massey}.
\par
First of all, we consider the case in which the triple cup product is even. By Poincar\'e duality, the cup product on $H^1(Y;\mathbb{Z})$ vanishes modulo two. As above, this implies that the Rokhlin function is linear. Hence either all Rokhlin invariants concide (in which case the manifold is $\Pin$-standard), or exactly half of them take one value. We can compute the homology in the latter case as follows. We can write the torus of flat connections as $\mathbb{T}^1\times \mathbb{T}^2$ in such a way that the spin structures in $\{0\}\times \mathbb{T}^2$ and $\{\pi\}\times \mathbb{T}^2$ all have the same Rokhlin invariant. We can consider the two step filtration coming from the value in the component $\mathbb{T}^1$. The $E^1$ page is the direct sum of the equivariant coupled Morse homologies of the families parametrized by $\{0\}\times \mathbb{T}^2$ and $\{\pi\}\times \mathbb{T}^2$, which are $\Pin$-standard. In particular, we have
\begin{equation*}
E^1= \tilde{\Rin}\otimes\left((H^1(\mathbb{T}^2;\ztwo))\oplus (H^1(\mathbb{T}^2;\ztwo))\langle 1\rangle\right).
\end{equation*}
Here the shift of the second summand comes from the difference of the Rokhlin invariants. Furthermore, each summand $H^1(\mathbb{T}^2;\ztwo)$ has a filtration coming from the index of the Morse function on $\mathbb{T}^2$.  The $d_1$ differential maps the first summand to the second, and it also lowers the filtration level on $H^1(\mathbb{T}^2;\ztwo)$. Using the description of the moduli spaces in the case $b_1=1$, the filtration preserving component is readily computed to be multiplication by $Q^2$ (up to grading shift). Hence we have
\begin{equation*}
\HSb_*(Y,\spin)=(\mathcal{I}\oplus\mathcal{I}\langle2\rangle)\otimes (H^1(\mathbb{T}^2;\ztwo))
\end{equation*}

In the case the triple cup product is odd, there are again two cases (as it can be shown by a direct inspection): either $7$ spin structures attain one value and the remaining one a different one (as in the case of the three-torus), or $5$ spin structures attain one value and $3$ attain the other. The latter case can be realized from the general example in Figure \ref{massey} by tying a knot of Arf invariant $1$ in one of the components. We already see a difference with the even case in usual monopole Floer homology: as shown in \cite{KM}, in the odd case $\HMb_*(Y,\spin)$ has rank $3$ in each degree, rather than $4$. In the first of the two possible cases, we will show that
\begin{equation*}
\HSb_*(Y,\spin)=\left(H^1(\mathbb{T}^3)\oplus H^2(\mathbb{T}^3)\right)\otimes \tilde{\Rin}.
\end{equation*}
An analogous computation was provided in different terms in the case of the three-torus in \cite{Lin}.
To see this, first recall that the Gysin exact triangle
\begin{center}\label{triangle}
\begin{tikzpicture}
\matrix (m) [matrix of math nodes,row sep=1em,column sep=1em,minimum width=2em]
  {
  \HSb_{*}(Y,\spin) && \HSb_{*}(Y,\spin)\\
  &\HMb_{*}(Y,\spin) &\\};
  \path[-stealth]
  (m-1-1) edge node [above]{$\cdot Q$} (m-1-3)
  (m-2-2) edge node [left]{} (m-1-1)
  (m-1-3) edge node [right]{} (m-2-2)  
  ;
\end{tikzpicture}
\end{center}
implies that (if we think of $\HSb_*$ as a $\ztwo[Q]/Q^3$-module) each cyclic summand corresponds to a rank two subgroup $\ztwo\oplus\ztwo$ of $\HMb_*$). Furthermore, if this summand is isomorphic to $\ztwo[Q]/Q^i$, then the generators of this subgroup differ in degree by $i$.
\par
We look at the spectral sequence from Proposition \ref{spectralseq}. The $E^1$ page is given by
\begin{center}
\begin{tikzpicture}
\matrix (m) [matrix of math nodes,row sep=0.5em,column sep=0.5em,minimum width=2em]
{\ztwo& &&\\
\ztwo &\ztwo^3&&\ztwo\\
\ztwo&\ztwo^3&\ztwo^3&\ztwo\\
&\ztwo^3&\ztwo^3&\ztwo\\
&&\ztwo^3&\\
};
\end{tikzpicture}
\end{center}
where each column repeats $4$-periodically. The groups in the $i$th column correspond to the critical points of index $3-i$. The spin structure corresponding to the last column has different Rokhlin invariant (so that it is shifted in degree by $2$). We will use the convention that between two consecutive groups in the same column the map $Q$ has the highest possible rank.

The differential $d_1$ goes from one column to the one on its right, and the $E^2$ page is given by
\begin{center}
\begin{tikzpicture}
\matrix (m) [matrix of math nodes,row sep=0.5em,column sep=0.5em,minimum width=2em]
{\ztwo& &&\\
\ztwo &\ztwo^3&&\ztwo\\
\ztwo&\ztwo^3&\ztwo^2&\ztwo\\
&\ztwo^3&\ztwo^3&\\
&&\ztwo^3&\\
};
\draw[->, dashed](m-2-2.east)--(m-3-4.west);
\draw[->](m-1-1.east)--(m-2-4.west);
\end{tikzpicture}
\end{center}
Because of the module structure, the only possible non-trivial differential $d_2$ is the dashed one. Furthermore, as $\HMb$ has rank $3$ in each degree, from the discussion on the Gysin sequence above the differential $d_3$ is forced to be the arrow drawn. Now, if $d_2$ is not zero, the final result is
\begin{center}
\begin{tikzpicture}
\matrix (m) [matrix of math nodes,row sep=0.5em,column sep=0.5em,minimum width=2em]
{\ztwo &\ztwo^2&&\\
\ztwo&\ztwo^3&\ztwo^2&\\
&\ztwo^3&\ztwo^3&\\
&&\ztwo^3&\\
};
\end{tikzpicture}
\end{center}
Of course, there are no possible extensions as $\ztwo[Q]/Q^3$-modules, so that this is indeed $\HSb_*$. On the other hand, this module requires $7$ generators over $\ztwo[Q]/Q^3$, so that one obtains a contradiction using the Gysin sequence. Hence $d_2$ vanishes and the $E^\infty$-page is
\begin{center}
\begin{tikzpicture}
\matrix (m) [matrix of math nodes,row sep=0.5em,column sep=1em,minimum width=2em]
{&\ztwo^3&&\\
\ztwo&\ztwo^3&\ztwo^2&\\
&\ztwo^3&\ztwo^3&\\
&&\ztwo^3&\\
};
\draw[->](m-2-1.east)--(m-3-3.west);
\end{tikzpicture}
\end{center}
Again this requires $7$ generators over $\ztwo[Q]/Q^3$, but there is a non-trivial extension, as shown by the arrow, and the result follows.
\par
Finally, in the case in which exactly $5$ spin structures have the same Rokhlin invariant, we have
\begin{equation*}
\HSb_*(Y,\spin)=(\mathrm{I}\oplus\mathrm{I}\langle2\rangle)\otimes H^1(\mathbb{T}^3;\ztwo)
\end{equation*}
by an analogous argument. We just point out that this is an example in which the differential $d_2$ of the spectral sequence is non-zero. Indeed, we can assume after a basis change that the Rokhlin map is
\begin{equation*}
\lambda_1\x_1+\lambda_2\x_2+\lambda_3\x_3\mapsto \lambda_1+\lambda_2+\lambda_3+\lambda_1\lambda_2\lambda_3
\end{equation*}
so that the $E^1$-page of the spectral sequence from Proposition \ref{spectralseq} looks like
\begin{center}
\begin{tikzpicture}
\matrix (m) [matrix of math nodes,row sep=0.5em,column sep=0.5em,minimum width=2em]
{&&&\ztwo\\
\ztwo& &\ztwo^3&\ztwo\\
\ztwo &\ztwo^3&\ztwo^3&\ztwo\\
\ztwo&\ztwo^3&\ztwo^3&\\
&\ztwo^3&&\\
&&&\\
};
\end{tikzpicture}
\end{center}
repeated four-periodically. So $E^2$ is
\begin{center}
\begin{tikzpicture}
\matrix (m) [matrix of math nodes,row sep=0.5em,column sep=0.5em,minimum width=2em]
{&&&\ztwo\\
\ztwo& &\ztwo^2&\ztwo\\
\ztwo &\ztwo&\ztwo^3&\\
\ztwo&\ztwo^3&\ztwo&\\
&\ztwo^3&&\\
&&&\\
};
\end{tikzpicture}
\end{center}
and, if we suppose that $d_2$ is zero, we see that also $d_3$ has to be zero (for degree reasons and because it is a map of $\Rin$-modules). On the other hand, this group cannot fit in the Gysin exact sequence with $\HMb_*$.
\begin{figure}
  \centering
\def\svgwidth{0.9\textwidth}
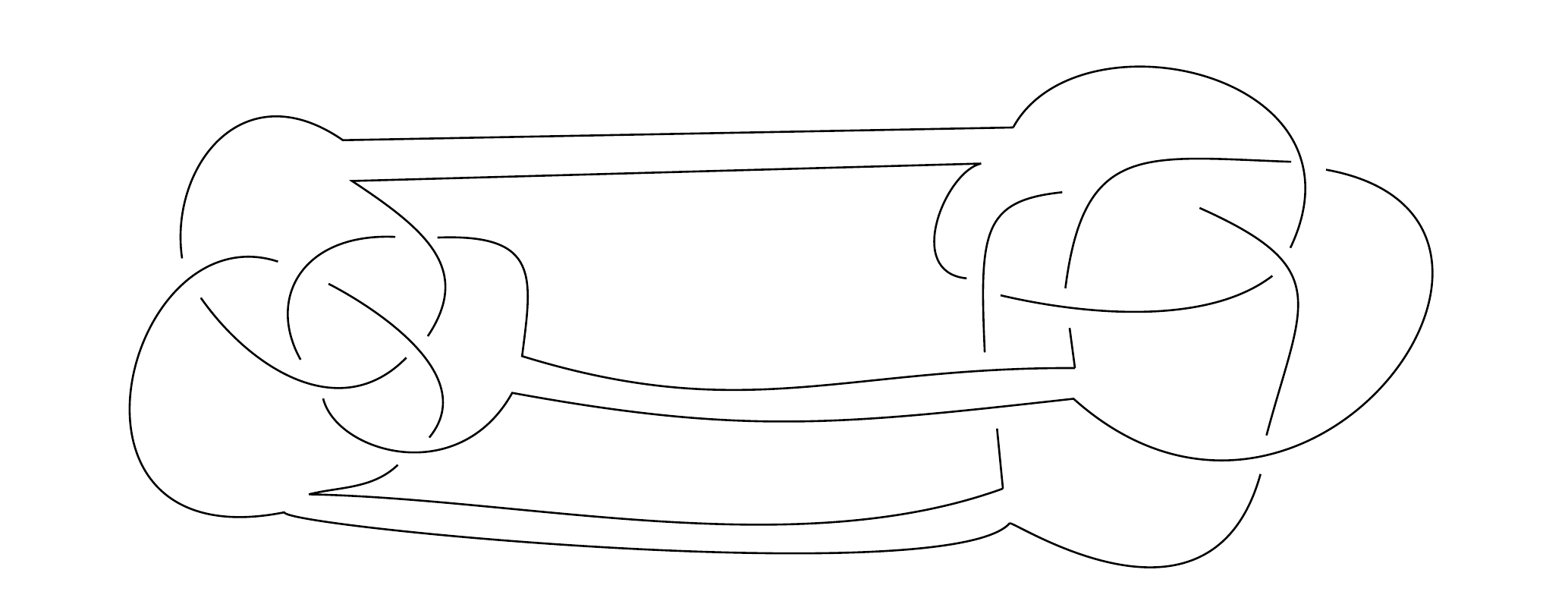
    \caption{The band sum of two copies of the Borromean rings. Taking the band sum of $n$ copies of the Borromean rings, and doing zero surgery on each of the components, one obtains a three manifold with $b_1=3$ and triple cup product $n$.}
    \label{massey}
\end{figure}

\vspace{0.5cm}

\bibliographystyle{alpha}
\bibliography{biblio}

\begin{thebibliography}{Lin15b}

\bibitem[APS76]{APS}
M.~F. Atiyah, V.~K. Patodi, and I.~M. Singer.
\newblock Spectral asymmetry and {R}iemannian geometry. {III}.
\newblock {\em Math. Proc. Cambridge Philos. Soc.}, 79(1):71--99, 1976.

\bibitem[AS69]{ASskew}
M.~F. Atiyah and I.~M. Singer.
\newblock Index theory for skew-adjoint {F}redholm operators.
\newblock {\em Inst. Hautes \'Etudes Sci. Publ. Math.}, (37):5--26, 1969.

\bibitem[AS71a]{AS4}
M.~F. Atiyah and I.~M. Singer.
\newblock The index of elliptic operators. {IV}.
\newblock {\em Ann. of Math. (2)}, 93:119--138, 1971.

\bibitem[AS71b]{AS5}
M.~F. Atiyah and I.~M. Singer.
\newblock The index of elliptic operators. {V}.
\newblock {\em Ann. of Math. (2)}, 93:139--149, 1971.

\bibitem[Ati66]{AtiR}
M.~F. Atiyah.
\newblock {$K$}-theory and reality.
\newblock {\em Quart. J. Math. Oxford Ser. (2)}, 17:367--386, 1966.

\bibitem[Ati67]{AtiK}
M.~F. Atiyah.
\newblock {\em {$K$}-theory}.
\newblock Lecture notes by D. W. Anderson. W. A. Benjamin, Inc., New
  York-Amsterdam, 1967.

\bibitem[Ati71]{Ati}
Michael~F. Atiyah.
\newblock Riemann surfaces and spin structures.
\newblock {\em Ann. Sci. \'Ecole Norm. Sup. (4)}, 4:47--62, 1971.

\bibitem[Hat02]{Hat}
Allen Hatcher.
\newblock {\em Algebraic topology}.
\newblock Cambridge University Press, Cambridge, 2002.

\bibitem[Kir89]{Kir}
Robion~C. Kirby.
\newblock {\em The topology of {$4$}-manifolds}, volume 1374 of {\em Lecture
  Notes in Mathematics}.
\newblock Springer-Verlag, Berlin, 1989.

\bibitem[KM07]{KM}
Peter Kronheimer and Tomasz Mrowka.
\newblock {\em Monopoles and three-manifolds}, volume~10 of {\em New
  Mathematical Monographs}.
\newblock Cambridge University Press, Cambridge, 2007.

\bibitem[Lin15a]{Lin}
Francesco Lin.
\newblock {A} {M}orse-{B}ott approach monopole {F}loer homology and the
  {T}riangulation {C}onjecture.
\newblock {\em to appear in Memoirs of the AMS}, arXiv:math/1404.4561, 2015.

\bibitem[Lin15b]{Lin2}
Francesco Lin.
\newblock {P}in(2)-monopole {F}loer homology, higher compositions, and
  connected sums.
\newblock {\em to appear in Journal of Topology}, arXiv:math/1404.4561, 2015.

\bibitem[Lin16]{Lin3}
Francesco Lin.
\newblock {L}ectures on monopole {F}loer homology.
\newblock {\em To appear in the Proceedings of the G\"okova Geometry and
  Topology conference}, 2016.

\bibitem[Man13]{Man3}
Ciprian Manolescu.
\newblock The {C}onley index, gauge theory, and triangulations.
\newblock {\em J. Fixed Point Theory Appl.}, 13(2):431--457, 2013.

\bibitem[Man16]{Man2}
Ciprian Manolescu.
\newblock Pin(2)-equivariant {S}eiberg-{W}itten {F}loer homology and the
  triangulation conjecture.
\newblock {\em J. Amer. Math. Soc.}, 29(1):147--176, 2016.

\bibitem[RS00]{RS}
Daniel Ruberman and Sa\v~so Strle.
\newblock Mod 2 {S}eiberg-{W}itten invariants of homology tori.
\newblock {\em Math. Res. Lett.}, 7(5-6):789--799, 2000.

\bibitem[Tau07]{Tau}
Clifford~Henry Taubes.
\newblock The {S}eiberg-{W}itten equations and the {W}einstein conjecture.
\newblock {\em Geom. Topol.}, 11:2117--2202, 2007.

\end{thebibliography}

\end{document}